\crefname{hypothesis}{Hypothesis}{Hypotheses}
\title{Hyperbolic Quadrature Method of Moments for the One-Dimensional Kinetic Equation\thanks{Submitted to the editors \today.}}
\author{Rodney O.~Fox\thanks{Department of Chemical and Biological Engineering, Iowa State University, 618 Bissell Road, Ames, IA 50011-1098, USA (\email{rofox@iastate.edu}).}
\and Fr\'ed\'erique Laurent\thanks{Laboratoire EM2C \& F\'ed\'eration de Math\'ematiques de CentraleSup\'elec, CNRS, CentraleSup\'{e}lec, Universit\'e Paris-Saclay, 3 rue Joliot-Curie 91192 Gif-sur-Yvette, France (\email{frederique.laurent@centralesupelec.fr}).}
}
\DeclarePairedDelimiter\floor{\lfloor}{\rfloor}
\def\bfM{\mathbf{M}}
\def\bfF{\mathbf{F}}
\def\bfI{\mathbf{I}}
\def\bfJ{\mathbf{J}}
\def\bfK{\mathbf{K}}
\def\bfY{\mathbf{Y}}
\def\bbfC{\mathbf{C}}
\def\bbfS{\mathbf{S}}
\def\bfC{\widetilde{\mathbf{C}}}
\def\bfS{\widetilde{\mathbf{S}}}
\newcommand{\bP}{{\overline{P}}}
\newcommand{\bQ}{{\overline{Q}}}
\newcommand{\bR}{{\overline{R}}}
\newcommand{\balpha}{{\overline{\alpha}}}
\newcommand{\bbeta}{{\overline{\beta}}}
\newcommand{\ba}{{\bar a}}
\newcommand{\bb}{{\bar b}}
\def\RR{\mathbb{R}}
\newcommand{\bu}{{\bar u}}
\begin{document}

\maketitle

\begin{abstract}
A solution is proposed to a longstanding open problem in kinetic theory, namely, given any set of realizable velocity moments up to order $2n$, a closure for the moment of order $2n+1$ is constructed for which the moment system found from the free-transport term in the one-dimensional (1-D) kinetic equation is globally hyperbolic and in conservative form. In prior work, the hyperbolic quadrature method of moments (HyQMOM) was introduced to close this moment system up to fourth order ($n \le 2$). Here, HyQMOM is reformulated and extended to arbitrary even-order moments. The HyQMOM closure is defined based on the properties of the monic orthogonal polynomials $Q_n$ that are uniquely defined by the velocity moments up to order $2n-1$. Thus, HyQMOM is strictly a moment closure and does not rely on the reconstruction of a velocity distribution function with the same moments. On the boundary of moment space, $n$ double roots of the characteristic polynomial $P_{2n+1}$ are the roots of $Q_n$, while in the interior, $P_{2n+1}$ and $Q_n$ share $n$ roots. The remaining $n+1$ roots of $P_{2n+1}$ bound and separate the roots of $Q_n$. An efficient algorithm, based on the Chebyshev algorithm, for computing the moment of order $2n+1$ from the moments up to order $2n$ is developed. The analytical solution to a 1-D Riemann problem is used to demonstrate convergence of the HyQMOM closure with increasing $n$. 
\end{abstract}

\begin{keywords} 
kinetic equation, quadrature-based moment methods, hyperbolic quadrature method of moments 
\end{keywords}

\begin{AMS}
	82C40, 35L60, 35Q70 
\end{AMS}

\section{Introduction}\label{s:intro}

Moment closures in the context of kinetic theory have a long history. The 13-moment closure of Grad \cite{grad1949} and the moment-closure hierarchies of Levermore \cite{levermore96b} are two well-known examples. In general, finite-dimensional moment systems derived from a kinetic equation will have unclosed terms, usually involving higher-order moments that are not included in the moment vector. For example, the free-transport term in the Boltzmann equation generates an unclosed spatial flux for the moment of order $k$ that depends on the moment of order $k+1$. Broadly speaking, the closure of the latter can be accomplished (as proposed by Grad \cite{grad1949}) using a perturbative solution for the velocity distribution function (VDF) valid near the equilibrium distribution, or (as proposed by Levermore \cite{levermore96b}) using a non-perturbative reconstruction of the VDF such as entropy maximization \cite{mt2013}. 

Regardless of the method used to derive it, a well-posed moment closure must ensure that the predicted moments are realizable (i.e., they must be in the interior or on the boundary of the convex moment space \cite{h1944,dette97,Lasserre2010,Schmudgen2017}) and that the moment system derived from the kinetic equation is globally hyperbolic \cite{levermore96b,hly2020,flz2020,flz2020jcp}. Furthermore, the moment closure must be well defined for every set of realizable moments, in particular for every moment vector in the interior of moment space \cite{junk98,flz2020}, and preferably result in a moment system in conservative form.

One method to ensure realizability of the unknown moments is to reconstruct a non-negative VDF from the known (realizable) moments; however, this is insufficient to ensure that the resulting moment system will be globally hyperbolic. For example, a one-dimensional (1-D) kinetic equation can be modeled by a moment system up to an even-order ($2n$) moment, whose flux depends of the unknown moment of order $2n+1$. Since odd-order moments need only be finite to be realizable, any value can be chosen to close the flux (e.g., set the $2n+1$ central moment equal to zero). Moreover, it is always possible to reconstruct a non-negative VDF based on this choice (e.g., a weighted sum of Dirac delta functions \cite{w1974,gautschi04,Chebyshev1859}), but it is unlikely that such an arbitrary choice will result in a globally hyperbolic moment system \cite{hly2020}. 

Indeed, there is no need to require that a moment closure corresponds to a \emph{particular} form of the VDF (except on the boundary of moment space where the VDF is unique \cite{Schmudgen2017}). In other words, the ability to reconstruct the VDF from its moments is not a necessary condition for global hyperbolicity. Conversely, poor choices for the moment closure can generate unrealizable moments during the time advancement of the moment system, even if the moment system is hyperbolic and in conservative form \cite{vikas2011}. Thus, care must be taken to guarantee that moments on the boundary of moment space do not leave it due to the closure for the flux.

For classical applications of the Boltzmann equation \cite{t2016}, it is often acceptable to employ a moment closure that is realizable and conditionally hyperbolic in a subset of moment space \cite{struch05}  (e.g., near the moments of the equilibrium VDF). A recent example of such a closure applied to rarefied gas dynamics can be found in \cite{ks2020}. For the same application, other authors have `regularized' or `modified' the moment system by adding non-conservative terms (see, for example, \cite{cfl2013,cfl2015,kf2020,flz2020,fm2019}) to achieve global hyperbolicity. In doing so, the direct connection between the closure and the reconstructed VDF is lost. 

As an example, for moment vectors on the boundary of moment space where the VDF is unique \cite{Schmudgen2017}, it remains to be shown whether or not the eigenvalues of the modified moment system found using the procedure in \cite{kf2020} correspond to the exact values found in \cite{kah11_cms,hly2020}. The latter are equal to the roots of the $n$th-order orthogonal polynomial $Q_n$, which can be found from the moments using the Chebyshev algorithm \cite{w1974,gautschi04,Chebyshev1859}. In the context of dilute sprays and particulate flows without collisions \cite{fox08b,fox09b,dechaisemartin2009cras,kah11_ctr,mf2013,cflmv2017,flv2018,fm2019,hly2020}, moment systems very close to or on the boundary of moment space are regularly encountered. Thus, a robust moment closure must be able to describe accurately the evolution of moment sets near the boundary. 

Hereinafter, we consider the velocity moments for a 1-D kinetic equation with only the free-transport term, which suffices for testing whether the proposed moment closure is globally hyperbolic. Further, the moment system consists of the $2n+1$ moments up to order $2n$, so that the spatial flux requires a closure for the moment of order $2n+1$.  In prior work \cite{flv2018}, the hyperbolic quadrature method of moments (HyQMOM) was developed for $n \le 2$, and shown to yield a realizable and globally hyperbolic moment system in conservative form. In that work, the motivation for closing the spatial flux was to fix the middle root of the polynomial $Q_3$ at the mean velocity $\bu$, which leads to a unique choice for the fifth-order velocity moment and corresponds to a VDF as a weighted sum of Dirac delta functions with one abscissa at $\bu$. With this choice, it is possible to compute the five eigenvalues of the moment system analytically and prove global hyperbolicity, as well as realizability. 

In the present work, we extend HyQMOM to $2 \le n$ by choosing a closure for the ($2n+1$)-order moment in conservative form that results in global hyperbolicity. In the process, we again make use of the polynomials $Q_n$ (that are determined from the $2n$ moments up to order $2n-1$) to define a monic polynomial $R_{n+1}$ of order $n+1$ whose roots separate and bound those of $Q_n$. The Chebyshev algorithm \cite{w1974,gautschi04,Chebyshev1859} is used to find $Q_n$ from a set of realizable moments.  We then demonstrate that $R_{n+1}$ is defined such that the characteristic polynomial of the 1-D moment system is $P_{2n+1} = Q_n R_{n+1}$. For $n=2$, this procedure results in the middle root of $Q_3$ not being equal to $\bu$, and 
leads to different locations for the eigenvalues for moment vectors in the interior of moment space than in \cite{flv2018}. Nonetheless, the basic idea for closing the ($2n+1$)-order moment using the properties of $Q_n$ remains unchanged, and thus we will continue to refer to the proposed extension as HyQMOM.

In the context of moment closures for the kinetic equation, HyQMOM is a \emph{pure} moment closure in the sense that it does not rely on the reconstruction of a VDF to define the closure.  In theory, an explicit formulae can be written for the HyQMOM closure that depends on the recurrence formula for the orthogonal polynomials $Q_n$. However, for large $n$, this expression is quite complex and must be computed numerically.  For this purpose, in \cref{sec:mc}, we develop an efficient algorithm for computing the HyQMOM closure based on the Chebyshev algorithm. Before describing the proposed closure, in \cref{s::mom} we provide background information on the VDF and its moments. In \cref{s:quad-1D}, we introduce the 1-D kinetic equation and prove a theorem that relates the characteristic polynomial of the moment system $P_{2n+1}$ to the functional form of the moment closure for the standardized moment $S_{2n+1}$.  Then, in \cref{sec:mc}, we define the HyQMOM closure for arbitrary $n$ by making use of the polynomials $Q_n$ and their recurrence coefficients $a_n$ and $b_n$.  

The remainder of the paper is then devoted to exploring the properties of the proposed closure.  
In \cref{sec:example}, examples of the HyQMOM closure for $n \le 5$ are provided to illustrate the behavior of the roots of the characteristic polynomial for selected moment sets. Then, in \cref{sec:tc}, a Riemann problem with free transport is solved numerically using the HyQMOM closure and compared to the analytical solution for the moments. For increasing $n$, we demonstrate numerically that the HyQMOM closure converges uniformly towards the analytical solution.  Finally, in \cref{sec:con}, conclusions are drawn and possible future extensions of the HyQMOM closure to multidimensional and bounded domains are briefly discussed.

\section{Velocity distribution function and its moments}\label{s::mom}

Consider the 1-D VDF $f(t,x,u)$ defined for $(t,x,u) \in \mathbb{R}^+ \times \mathbb{R} \times \mathbb{R}$. The VDF is non-negative and its moments with respect to the velocity $u$ are finite. For a symmetric VDF about the mean velocity $\bu$, $f(t,x,u-\bu) = f(t,x,\bu-u)$.

\subsection{Moments}

The moments of $f$ are defined by 
\begin{equation}
M_k (t,x) :=  \int_{\mathbb{R}} f(t,x,u) u^k \, \mathrm{d}u \quad \text{for $k \in \{0, 1, \dots, 2n+1 \}$}
\end{equation}
and $n \in \mathbb{N}$. Let us also denote the moment vector by $\bfM_{2n} := (M_0,M_1,\dots,M_{2n})^t$. The moment $M_{2n+1}$ is not included in the moment vector, but we wish to specify it as depending on $\bfM_{2n}$ subject to some constraints. This is the principal challenge faced in moment closures for the kinetic equation.

\subsection{Central moments}
\label{ssec:cmom}

For $M_0>0$, the central moments are defined by
\begin{equation}\label{eq:M2C}
	C_k (t,x) := \frac{1}{M_0} \int_{\mathbb{R}} f(t,x,u) (u - \bu)^k \, \mathrm{d}u \quad \text{for $k \in \{0, 1, \dots, 2n+1\}$}
\end{equation}
where $\bu (t,x) = M_1/M_0$ is the mean velocity.  
By definition, $C_0=1$ and $C_1=0$.  
The next central moment $C_2 \ge 0$ is the velocity variance. 
Let us remark that the vector $\bbfC_k := (C_0,\dots,C_k)^t=(1,0,C_2,\dots,C_k)^t$ is the $k^{th}$-order moment vector corresponding to $v\mapsto \dfrac{1}{M_0}f(\bu+v)$.

For any $k \ge 2$, the central moment $C_k$ depends uniquely on the moment set ${\bfM_k:= (M_0,M_1,\dots,M_{k})}$ through the relation
\begin{equation}\label{eq:central}
	C_k  = \sum_{i=0}^k \binom{k}{i} \left(-\frac{M_1}{M_0}\right)^{k-i} \frac{M_i}{M_0}.
\end{equation}
And inversely, for $k\ge 2$, $M_k$ depends uniquely on the central moment vector ${\bfC_k}=(M_0,\bu,C_2,\dots,C_k)^t$ through the relation
\begin{equation}\label{eq:C2M}
	M_k  = M_0\left[  \sum_{i=2}^k \binom{k}{i} \bu^{k-i} C_i + \bu^k \right].
\end{equation}
In particular, the unclosed moment $M_{2n+1}$ can be written in terms of the components of $\bfC_{2n}$ as
\begin{equation}\label{eq:C2p}
M_{2n+1}  = M_0\left[  \sum_{i=2}^{2n} \binom{2n+1}{i} \bu^{2n+1-i} C_i + \bu^{2n+1} + C_{2n+1} \right] 
\end{equation}
where $C_{2n+1}$ must be specified as an algebraic function of ${\bfC_{2n}}$. Thus, in the context of moment closures, specifying $C_{2n+1}$ is equivalent to specifying $M_{2n+1}$.

\subsection{Standardized moments}
\label{ssec:smom}

For $C_2 > 0$, the standardized moments are defined by
\begin{equation}\label{eq:scaledcentral}
S_k := \frac{C_k}{C_2^{k/2}} \quad 
\text{for $k \in \{0,1, \dots, 2n+1 \}$}.
\end{equation}
By definition, $(S_0,S_1,S_2)=(1,0,1)$ and 
$\bbfS_k := (S_0,\dots,S_k)^t=(1,0,1,S_3,\dots,S_k)^t$ is the $k^{th}$-order moment vector corresponding to $v\mapsto \dfrac{\sqrt{C_2}}{M_0}f(\bu+\sqrt{C_2}v)$ .
When $C_2=0$, all higher-order central moments are null. Unless stated otherwise, hereinafter we assume that $0 < C_2$ and define the standardized moment vector $\bfS_k=(M_0,\bu,C_2,S_3,\dots,S_k)^t$, which has a one-to-one relation with $\bfM_k$ in the interior of moment space. 
In the context of moment closures, we must specify $S_{2n+1}$ for $2 \le n$ as an algebraic function of $\bfS_{2n}$ subject to constraints.

\subsection{Realizability}

In this work, we will not attempt to reconstruct the VDF from its moments. Instead, we approximate the moment of order $2n+1$ given lower-order moments, i.e., we seek a particular solution to the truncated Hamburger moment problem \cite{h1944}. Nonetheless, given the data $(M_0,\bu,C_2,S_3, \dots S_{2n}, S_{2n+1})$ with $M_0>0$, $C_2>0$, a reconstructed VDF consisting of a sum of weighted Dirac delta functions can be found when the moment set is realizable. The latter is verified using the Hankel determinants \cite{Schmudgen2017}:
\begin{equation}\label{eq:hankel}
H_{2k} =
\begin{vmatrix}
1      & 0   & 1   & S_3 & \cdots  & S_{k} \\
0      & 1   & S_3 & S_4 &        &  \\
1      & S_3 & S_4 & S_5 &        &  \\
S_3    & S_4 & S_5 & S_6 &        &       \\
\vdots &     &     &     & \ddots &       \\
S_{k}  &     &     &     &        & S_{2k}
\end{vmatrix} ,
\qquad k\in \{2,\dots,n\}.
\end{equation}
If $H_{2k}>0$ for $k\in \{2,\dots,n\}$, then the moment set is realizable and lies in the interior of moment space. It is then said to be strictly realizable.
If $H_{2k} < 0$ for any $k \in \{2,\dots,n\}$ then the moment set is unrealizable. 
Otherwise, the moment set is realizable if and only if there exists $k \in \{ 2,\dots,n \}$ such that $H_4>0,\dots,H_{2k-2}>0$, $H_{2k}=\dots=H_{2n}=0$ and 
the vector $(S_{n+1},\dots,S_{2n+1})$ is in the span of $(S_j,S_{j+1},\dots,S_{j+n})$, $j=0,\dots,n$.
In this last case, the moment set then lies on the boundary of moment space and the VDF is exactly a weighted sum of $k$ Dirac delta functions.

Note that $0 = H_{2n}$ defines the lower bound for $S_{2n}$ (i.e., the even-order moments). In contrast, for the Hamburger moment problem, if $H_{2k}>0$ for $k\in \{2,\dots,n\}$, the odd-order moment $S_{2n+1}$ can take on any finite real value.
In the context of moment closures, we require that the algebraic function defining $S_{2n+1}$ be valid for any vector of realizable moments $\bfM_{2n}$.

\subsection{Linear functional associated with a moment vector} \label{ssec:lin}
For a moment vector $\bfM_N$, one can define the linear functional $\langle . \rangle_{\bfM_N}$ on the space $\mathbb{R}[X]_{N}$ of the real polynomial function of degree smaller than $N$ by 
\begin{equation}\label{eq:defangle}
\langle X^k \rangle_{\bfM_N}=M_k, \quad  \text{for $k \in \{0, 1, \dots, N\}$}.
\end{equation}
Let us remark that if $\bfM_N$ is associated with a VDF $f$, then 
\begin{equation}
\forall P\in\mathbb{R}[X]_{N} \qquad  \langle P \rangle_{\bfM_N} =\int_{\mathbb{R}} P(u)f(u) \mathrm{d} u.
\end{equation}
Let us consider the linear functionals $\langle . \rangle_{\bbfS_N}$ associated with the standardized moments defined in \cref{ssec:smom}.
It is linked to the one associated with $\bfM_N$ by
\begin{equation}\label{eq:link_func}
\forall P\in\mathbb{R}[X]_{N} \qquad  \langle P(X) \rangle_{\bfM_N} = M_0\left\langle P\left(\bu+\sqrt{C_2}X\right) \right\rangle_{\bbfS_N}.
\end{equation}
In the following, the notation $\langle . \rangle$ is used for simplicity and corresponds to $\langle . \rangle_{\bbfS_{2n}}$.
Indeed, it is easier to work with $\bfS_{2n}$ than directly with $\bfM_{2n}$, and it is equivalent as soon as $M_0>0$ and $C_2>0$.
Moreover, as soon as $\bfM_{2n}$ (or equivalently $\bbfS_{2n}$) is strictly realizable, the application $(p,q)\mapsto \langle pq \rangle$ defines a scalar product on $\mathbb{R}[X]_{n}$.

\section{Kinetic equation and its moment system}\label{s:quad-1D}

The 1-D kinetic equation for the VDF including only free transport is
\begin{equation} \label{equ:cinetique}
\partial_t f + u \partial_x f  = 0, 
\end{equation}
with initial condition $f(0,x,u) = f_0(x,u)$. The exact solution is given by $f(t,x,u) = f(0,x-ut,u) = f_0(x-ut,u)$. In this work, we seek to approximate the moments of the VDF found from \eqref{equ:cinetique} by formulating a finite-dimensional moment system.

\subsection{Moment system}

The unclosed governing equations for the moment vector are 
\begin{equation} \label{modele_3pic}
\begin{aligned}
\partial_t M_0 + \partial_x M_1 &= 0, \\
\partial_t M_1 + \partial_x M_2 &= 0, \\
\vdots & \\
\partial_t M_{N} + \partial_x M_{N+1} &= 0  ;
\end{aligned}
\end{equation}
or, in vector form,
\begin{equation}\label{eq:closure5}
\partial_t \bfM_{N} + \partial_x \bfF (\bfM_{N} ) = \boldsymbol{0}
\end{equation}
where the unclosed flux vector is $\bfF (\bfM_{N} ) = (M_1, M_2, \dots, M_{N}, M_{N+1} )^t$. A viable moment closure provides an algebraic function $M_{N+1}(\bfM_{N})$ that is well defined for any realizable moment vector $\bfM_{N}$. It then must be demonstrated that such a closure is globally hyperbolic. In this work, the characteristic polynomial derived from \eqref{eq:closure5} will be used for this purpose. The system \eqref{eq:closure5} with the closed flux vector will then have a conservative hyperbolic form.

\subsection{Characteristic polynomial}

The Jacobian matrix of system \eqref{eq:closure5} is given by 
\begin{equation}\label{Jacmatrix1}
	\frac{D \bfF}{D \bfM} =
	\begin{pmatrix}
		0      & 1     & 0      & \dots & 0   \\
		0      & 0     & 1      &       &  0 \\
		\vdots &       & \ddots & \ddots &  \vdots\\
		0      & 0     &  0     &    0    & 1 \\
		\frac{\partial M_{N+1}}{\partial M_0} & 
		\frac{\partial M_{N+1}}{\partial M_1} & 
		\frac{\partial M_{N+1}}{\partial M_2} &\dots &
		\frac{\partial M_{N+1}}{\partial M_{N}} 
	\end{pmatrix} .
\end{equation} 
Equivalently, as soon as $M_0>0$, the variable set $\bfC_{N}=(M_0,\bu,C_2,\dots,C_{N})^t$ can be used, as well as $\bfS_{N}=(M_0,\bu,C_2,S_3,\dots,S_{N})^t$ if in addition $M_2M_0>M_1^2$, where the central moments $C_k$ and the standardized moments $S_k$ are defined by \eqref{eq:central} and \eqref{eq:scaledcentral}, respectively.
The closure is then given by the algebraic function $C_{N+1}(\bfC_{N})$ or $S_{N+1}(\bfS_{N})$, and \eqref{eq:C2p}.

In terms of the central moments, as soon as the variables are regular enough, the system can be rewritten:
\begin{equation}
	\partial_t \bfC_{N} + \bfJ \partial_x \bfC_{N} = \boldsymbol{0}
\end{equation}
with 
\begin{equation}\label{Jacmatrix}
	\bfJ := 
	\frac{D \bfM}{D \bfC}^{-1}
	\frac{D \bfF}{D \bfM}
	\frac{D \bfM}{D \bfC} .
\end{equation}
Of course, the characteristic polynomial of $\frac{D \bfF}{D \bfM}$ is the same as the one of $\bfJ$ and is denoted 
$\bP_{N+1}(X):=|\bfJ-X\bfI |$.
The following proposition can then be shown.
\begin{proposition}\label{th:dependence}
	Let $\bfM_{N}=(M_0,M_1,M_2,\dots,M_{N})^t$ be a realizable moment vector such that $M_0>0$ and $M_2M_0>M_1^2$, and let 
	$\bfS_{N} =(M_0,\bu,C_2,S_3,\dots,S_{N})^t$ be the corresponding standardized moment vector.
	Let us assume that the function $S_{N+1}$ does not depend on $(M_0,\bu,C_2)$, i.e., $S_{N+1}(S_3,\dots,S_{N})$.
	Then, the following polynomial 
	\begin{multline}\label{eq:polychar}
		P_{N+1}(X) := \bP_{N+1}\left(\bu + C_2^{1/2} X\right) C_2^{-(N+1)/2} \\
		=\left| \bfJ - \left( \bu + C_2^{1/2} X \right) \bfI \right| C_2^{-(N+1)/2} ,
	\end{multline}
	where $\bfJ$ is defined by \eqref{Jacmatrix},
	only depends on $(S_3,\dots,S_{N})$.
\end{proposition}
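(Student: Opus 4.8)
The plan is to exhibit $P_{N+1}$ explicitly as the characteristic polynomial of a matrix whose entries depend only on $(S_3,\dots,S_N)$, which makes the claimed independence from $(M_0,\bu,C_2)$ immediate. First I would pass from the conserved variables $\bfM_N$ to the standardized variables $\bfS_N = (M_0,\bu,C_2,S_3,\dots,S_N)^t$ by composing two affine changes of velocity, as encoded in \eqref{eq:link_func}: the shift $u \mapsto u - \bu$ and the scaling $v \mapsto v/\sqrt{C_2}$. Since the moment system \eqref{eq:closure5} comes from the free-transport operator $\partial_t + u\partial_x$, a Galilean shift $u \mapsto u - \bu$ conjugates $\bfJ$ by a (lower-triangular, unipotent) constant matrix and translates its spectrum by $\bu$, while the scaling $v \mapsto v/\sqrt{C_2}$ conjugates by $\diag(1,C_2^{1/2},\dots,C_2^{N/2})$ and rescales the spectrum by $C_2^{1/2}$. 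This is exactly the substitution $X \mapsto \bu + C_2^{1/2}X$ together with the normalization factor $C_2^{-(N+1)/2}$ in \eqref{eq:polychar}; so after this reduction $P_{N+1}(X) = |\widetilde{\bfJ} - X\bfI|$ where $\widetilde{\bfJ}$ is the Jacobian of the \emph{standardized} moment system, i.e. the analogue of \eqref{Jacmatrix} built from $\bbfS_N$ with closure $S_{N+1}$.

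Next I would compute $\widetilde{\bfJ}$ directly. In the standardized variables the flux is $(0, 1, S_3, S_4, \dots, S_N, S_{N+1}(S_3,\dots,S_N))$ relative to an appropriate companion-type structure; more concretely, the last row of $\frac{D\bfF}{D\bfM}$ in \eqref{Jacmatrix1} is $\bigl(\partial M_{N+1}/\partial M_0,\dots,\partial M_{N+1}/\partial M_N\bigr)$, and the similarity transformation $\frac{D\bfM}{D\bfC}^{-1}\,(\cdot)\,\frac{D\bfM}{D\bfC}$ of \eqref{Jacmatrix} — followed by the further $C_2$-scaling above — replaces this by the gradient of $S_{N+1}$ with respect to $(S_2, S_3, \dots, S_N) = (1, S_3, \dots, S_N)$. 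The key point is that, because $S_{N+1}$ is assumed to be a function of $(S_3,\dots,S_N)$ alone and the first $N-1$ rows of the companion matrix carry no parameters, $\widetilde{\bfJ}$ has the shape
\begin{equation*}
\widetilde{\bfJ} =
\begin{pmatrix}
0 & 1 & 0 & \cdots & 0 \\
0 & 0 & 1 & & 0 \\
\vdots & & \ddots & \ddots & \vdots \\
0 & 0 & 0 & 0 & 1 \\
* & * & \partial_{S_3}S_{N+1} & \cdots & \partial_{S_N}S_{N+1}
\end{pmatrix},
\end{equation*}
where the two $*$ entries (the derivatives of the bottom component with respect to $M_0$ and $\bu$, after standardization) must be shown to be independent of $(M_0,\bu,C_2)$ as well — and in fact they vanish, or at least are universal constants, precisely because the standardized bottom equation is $\partial_t S_{N+1} + (\text{something}) = 0$ with $S_0 = 1$, $S_1 = 0$, $S_2 = 1$ frozen. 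Expanding $|\widetilde{\bfJ} - X\bfI|$ along the first column then gives a polynomial in $X$ whose coefficients are polynomials in $(S_3,\dots,S_N)$ and the partials $\partial_{S_k}S_{N+1}$, all of which depend only on $(S_3,\dots,S_N)$. Hence $P_{N+1}$ does too.

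The main obstacle is bookkeeping in the second step: making precise how the similarity transformation \eqref{Jacmatrix}, composed with the $C_2^{1/2}$-scaling, acts on the \emph{last row} of the companion matrix and confirming that the two potentially dangerous entries ($\partial/\partial M_0$ and $\partial/\partial \bu$ of the bottom flux component, in standardized form) carry no residual dependence on $(M_0,\bu,C_2)$. I expect this to follow cleanly from the chain rule applied to \eqref{eq:C2p} together with $\partial S_{N+1}/\partial M_0 = \partial S_{N+1}/\partial\bu = \partial S_{N+1}/\partial C_2 = 0$ (the standing hypothesis), but it requires care because $M_{N+1}$ in the original variables \emph{does} depend on $(M_0,\bu,C_2)$ through \eqref{eq:C2p}; one must check that all such dependence is absorbed by the change-of-basis matrices and the spectral shift/scaling, leaving \eqref{eq:polychar} parameter-free. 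An alternative, perhaps slicker, route is purely operational: observe that replacing $f(u)$ by $\tfrac{\sqrt{C_2}}{M_0} f(\bu + \sqrt{C_2}\,u)$ maps any solution of \eqref{equ:cinetique}-type dynamics to another such solution with standardized moments, so the two moment systems are conjugate by the explicit affine-in-$u$ transformation, forcing their characteristic polynomials to agree up to the stated shift and scaling; then invariance under that conjugation is automatic and one only needs \eqref{eq:link_func} and the hypothesis on $S_{N+1}$.
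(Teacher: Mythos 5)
Your overall strategy---pass to central/standardized variables and exhibit the scaled characteristic polynomial as that of a matrix whose entries involve only $(S_3,\dots,S_N)$---is the same as the paper's, but two of your key intermediate claims are wrong, and the step you set aside as ``bookkeeping'' is in fact the entire content of the proof. The transformed Jacobian is \emph{not} a companion matrix. Because $M_k$ depends nonlinearly on $(M_0,\bu,C_2,S_3,\dots,S_k)$ through \eqref{eq:C2M}, the change of variables destroys the companion structure of \eqref{Jacmatrix1}: the matrix $\bfJ$ of \eqref{Jacmatrix} has nontrivial entries in its first \emph{three} columns in every row (the paper's computation produces entries such as $(C_{k+1}-kC_2C_{k-1})/M_0$, $kC_k$ and $-kC_{k-1}$ there), not merely ``two $*$ entries'' confined to the last row. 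So the assertion that ``the first $N-1$ rows of the companion matrix carry no parameters'' is false, and the real work is to verify that \emph{all} of these entries become functions of $(S_3,\dots,S_N)$ alone after factoring suitable powers of $M_0$ and $C_2^{1/2}$ out of each row and column; that determinant rescaling (which also produces the overall factor $C_2^{(N+1)/2}$) is what the paper's proof actually consists of, and your proposal neither performs it nor correctly predicts its outcome.

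The ``slicker route'' also rests on a false premise, and the conjugation language is off. A similarity transformation cannot ``translate the spectrum by $\bu$''; the shift and scaling of the argument in \eqref{eq:polychar} is a reparametrization of the roots, not a consequence of conjugating $\bfJ$, and the conjugating matrix $D\bfM/D\bfC$ is state-dependent, not constant. More importantly, $\bu(t,x)$ and $C_2(t,x)$ are themselves dynamical variables, so $v\mapsto\tfrac{\sqrt{C_2}}{M_0}f(t,x,\bu+\sqrt{C_2}\,v)$ does \emph{not} solve the free-transport equation: the substitution generates the commutator terms $-(\partial_t\bu+\bu\,\partial_x\bu+v\,\partial_x\bu)\,\partial_v\tilde f$ recorded in \eqref{eq:changvar}, and it is precisely these terms, combined with the equations \eqref{eq:M0u} for $M_0$ and $\bu$, that populate the first columns of $\bfJ$. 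To close the argument you must derive the evolution equations for the $C_k$ including these shift-induced terms, assemble $\bfJ$ explicitly, and then carry out the row/column rescaling of $\left|\bfJ-(\bu+C_2^{1/2}X)\bfI\right|$---which is the paper's proof.
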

\begin{proof}
	The equations for $M_0$ and $\bu$ are 
	\begin{equation}\label{eq:M0u}
		\begin{gathered}
			\partial_t M_0 + \bu\partial_x M_0 +M_0\partial_x \bu =0,\\
			\partial_t \bu + \frac{C_2}{M_0} \partial_x M_0  + \bu \partial_x \bu +\partial_x C_2=0.
		\end{gathered}
	\end{equation}
	Since the moment vector $\bfM_{N+1}(t,x)$ is realizable for each $(t,x)$, it can be associated with a VDF $f(t,x,u)$.
	Setting $u=v+\bu(t,x)$, the function $\tilde f(t,x,v) :=f(t,x,v+\bu(t,x))$ is then such that 
	\begin{equation}\label{eq:changvar}
		\partial_t f +u\partial_x f=\partial_t \tilde f +v \partial_x  \tilde f + \bu\partial_x \tilde f-(\partial_t \bu+\bu \partial_x \bu+v \partial_x \bu)\partial_v \tilde f.
	\end{equation}
	The central moments are such that $M_0C_k=\int_{\mathbb{R}} v \tilde f(t,x,v) \mathrm{d} v$ for $k=2,\dots,N$.\\
	Since $\int_{\mathbb{R}} u^k (\partial_t f +u\partial_x f)(t,x,u) \mathrm{d}u=0$, for $k=2,\dots,N$, then, thanks to \eqref{eq:changvar}:
	\begin{multline}
		\partial_t (M_0C_k) 
		+ \partial_x (M_0C_{k+1}) +\bu\partial_x(M_0C_k) 
		+(\partial_t \bu+\bu \partial_x \bu)kM_0C_{k-1} \\
		+ (\partial_x\bu)(k+1)M_0C_k=0.
	\end{multline}
	This equation can be rewritten, using \eqref{eq:M0u}:
	\begin{equation}
		\partial_t C_k 
		+ \frac{C_{k+1}-kC_2C_{k-1}}{M_0}\partial_x M_0
		+ kC_k\partial_x \bu
		- kC_{k-1}\partial_x C_2
		+ \bu\partial_x C_k + \partial_x C_{k+1}=0.
	\end{equation}
	This result, along with \eqref{eq:M0u}, allows to write the matrix $\bfJ$.
	Then, $\left| \bfJ - \left( \bu + C_2^{1/2} X \right) \bfI \right|$ can be written as
	\begin{multline*}
		\left| \quad
	\begin{matrix}
		-X\sqrt{C_2} & M_0 & 0 & \\
		\frac{C_2}{M_0} & -X\sqrt{C_2}  & 1 &\\
		\frac{C_3}{M_0} & 2C_2 &  -X\sqrt{C_2}  &\\
		\frac{C_4-3C_2C_2}{M_0} & 3C_3 &  -3C_2  &\\
		\vdots & \vdots & \vdots &\\
		\frac{C_{N-1}-(N-2)C_2C_{N-3}}{M_0} & (N-2)C_{N-2} &  -(N-2)C_{N-3}  &\\
		\frac{C_{N}-(N-1)C_2C_{N-2}}{M_0} & (N-1)C_{N-1} &  -(N-1)C_{N-2}  &\\
		\frac{C_{N+1}-NC_2C_{N-1}}{M_0} & NC_{N} &  -NC_{N-1} +\frac{\partial C_{N+1}}{\partial C_2}  
		&\\
	\end{matrix} \right. \\
	\left.
	\begin{matrix}
	0   & \cdots & 0 & 0 \\
	0   & \cdots & 0 & 0\\
	1   & \cdots & 0 & 0\\
	-X \sqrt{C_2}  & \ddots & 0 & 0\\
	    & \ddots &   & \vdots \\
	 0  &    & 1 & 0\\
	 0  &       & -X\sqrt{C_2}& 1\\
	 \frac{\partial C_{N+1}}{\partial C_3}   & \dots & \frac{\partial C_{N+1}}{\partial C_{N-1}} & \frac{\partial C_{N+1}}{\partial C_{N}}-X\sqrt{C_2}\\
	\end{matrix} \quad \right| .
	\end{multline*}
	Let us remark that, thanks to the assumption on $S_{N+1}$, $C_{N+1}(C_2,\dots,C_{N})$.
	Factoring $C_2^{1/2}/M_0$ to the first row, $C_2^{i/2}$ to each row $i>1$ and $M_0$ to the first column and  $1/C_2^{(j-1)/2}$ to each column $j>1$, we obtain
	\begin{multline*}
	C_2^{(N+1)/2} \left| \quad
	\begin{matrix}
		-X & 1 & 0  \\
		1 & -X  & 1 \\
		S_3 & 2 &  -X  \\
		S_4-3S_2 & 3S_3 &  -3  \\
		\vdots & \vdots & \vdots  \\
		S_{N-1}-(N\!\!-\!\!2)S_{N-3} & (N\!\!-\!\!2)S_{N-2} &  -(N\!\!-\!\!2)S_{N-3}  \\
		S_{N}-(N\!\!-\!\!1)S_{N-2} & (N\!\!-\!\!1)S_{N-1} &  -(N\!\!-\!\!1)S_{N-2}  \\
		S_{N+1}-NS_{N-1} & NS_{N} &  -NS_{N-1} +\frac{N+1}{2}S_{N+1} \\
	\end{matrix} \right. \\
	\left.
	\begin{matrix}
		 0 & \cdots & 0 & 0 \\
		 0 & \cdots & 0 & 0\\
		 1 & \cdots & 0 & 0\\
		 -X& \ddots & 0 & 0\\
		   & \ddots &   & \vdots  \\
		 0 &        & 1 & 0\\
		 0 &        & -X& 1\\
		 \frac{\partial S_{N+1}}{\partial S_{3}} &
		 \dots & \frac{\partial S_{N+1}}{\partial S_{N-1}} & \frac{\partial S_{N+1}}{\partial S_{N}}-X\\
	\end{matrix} \quad \right| .
	\end{multline*}
	This result shows that $P_{N+1}(X)$ only depends on $(S_3,\dots,S_{N})$, thus concluding the proof.
\end{proof} 

The eigenvalues of \eqref{eq:closure5} are then written $\lambda_k = \bu + C_2^{1/2} \mu_k$, where $\mu_k$ is a root of $P_{N+1}$ that only depends on $(S_3,\dots,S_{N})$.
It is then easy to calculate the coefficients of the characteristic polynomial:
\begin{theorem}\label{theorem1}
	With the same assumptions as in \cref{th:dependence}, the scaled characteristic polynomial defined by \eqref{eq:polychar} has the form
	\begin{equation}\label{eq:PX}
		P_{N+1}(X) = \sum_{m=0}^{N+1} c_{m} X^{m} 
	\end{equation}
	with coefficients defined by
	\begin{equation}\label{charPcoeff}
		\begin{gathered}
			c_{N+1} = 1, \ c_{N} = - \frac{\partial S_{N+1}}{\partial S_{N}}, \
			c_{N-1} = - \frac{\partial S_{N+1}}{\partial S_{N-1}}, \ \cdots, \
			c_{3} = - \frac{\partial S_{N+1}}{\partial S_{3}}, \\
			c_2 = - \frac{1}{2} \sum^{N+1}_{m=3} m S_{m} c_{m} , \quad
			c_1 = - \sum_{m=3}^{N+1} m S_{m-1} c_{m}  , \\ 
			c_0 = \frac{1}{2} \sum^{N+1}_{m=3} (m-2) S_{m} c_{m}  = - \, c_2 
			 - \sum^{N+1}_{m=3} S_{m} c_{m}  .
		\end{gathered}
	\end{equation}
\end{theorem}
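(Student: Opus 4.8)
\emph{The plan} is to read off the coefficients of $P_{N+1}$ from the determinant computed in the proof of \cref{th:dependence}. Writing $A(X)$ for the $(N{+}1)\times(N{+}1)$ matrix displayed there, one has $P_{N+1}(X)=\det A(X)$ (
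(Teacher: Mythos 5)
Your proposal stops after announcing the plan: you declare that $P_{N+1}(X)=\det A(X)$ for the matrix $A(X)$ displayed in the proof of \cref{th:dependence}, and then no computation follows. As written there is no proof here — none of the claimed formulas in \eqref{charPcoeff} is derived, and the entire content of the theorem is precisely those formulas.

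Beyond the truncation, the plan itself glosses over the hard part. The matrix in the proof of \cref{th:dependence} is \emph{not} a companion matrix: its first two columns and its last row carry extra entries ($1$, $2$, $S_3$, $S_4-3S_2$, \dots, and the terms $\tfrac{N+1}{2}S_{N+1}$, $-NS_{N-1}$, etc.). Reading the top coefficients $c_{N},\dots,c_3$ off the last row is plausible, but obtaining $c_2$, $c_1$ and $c_0$ — the sums $-\tfrac12\sum m S_m c_m$, $-\sum m S_{m-1}c_m$, $\tfrac12\sum(m-2)S_m c_m$ — requires a careful cofactor expansion along those first two columns, and that bookkeeping is exactly what is missing. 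The paper sidesteps this by a cleaner route: since \cref{th:dependence} shows $P_{N+1}$ depends only on $(S_3,\dots,S_N)$, one may set $(M_0,\bu,C_2)=(1,0,1)$, whereupon $\bfM_N=\bfC_N=\bbfS_N$ and $P_{N+1}$ is the characteristic polynomial of the genuine companion matrix \eqref{Jacmatrix1}; its coefficients are then $-\partial M_{N+1}/\partial M_k$ evaluated at that point, which are converted to derivatives with respect to the standardized moments by the chain rule
\begin{equation*}
(c_0,\dots,c_N) \;=\; -\left.\frac{D M_{N+1}}{D\bfS}\right|_{\bbfS_N}\left(\left.\frac{D M}{D\bfS}\right|_{\bbfS_N}\right)^{-1},
\end{equation*}
using the explicit Jacobians obtained from \eqref{eq:C2M} and \eqref{eq:C2p}. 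If you want to salvage your determinant route you must actually carry out the expansion of $\det A(X)$ and isolate the coefficients of $X^2$, $X^1$ and $X^0$; otherwise adopt the specialization-plus-chain-rule argument, which reduces the problem to inverting one explicit lower-triangular-like matrix.
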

\begin{proof}
	We denoted $\bbfS_{N}:=(1,0,1,S_3,\dots,S_{N})^t$.
	From \cref{th:dependence}, it is known that $P_{N+1}$ does not depend on $(M_0,\bu,C_2)$, so that we can choose $(M_0,\bu,C_2)=(1,0,1)$ in such a way that $\bfS_{N}=\bbfS_{N}$. 
	This implies that the corresponding moment vector is $\bfM_{N}=\bbfS_{N}$, as well as for the central moment vector: $\bfC_{N}=\bbfS_{N}$.
	Then $P_{N+1}$ is equal to the characteristic polynomial of $\frac{D \bfF}{D \bfM}$ for $\bfM_{N}=\bbfS_{N}$.
	Thus, from \eqref{Jacmatrix1}, the coefficients of this polynomial can be written:
	\begin{multline*}
	c_{N+1} = 1, \quad 
	(c_0,c_1,\dots,c_{N}) = -\left.\frac{D M_{N+1}}{D\bfM}\right|_{\bfM_{N}=\bbfS_{N}} \\
	=  -\left.\frac{D M_{N+1}}{D\bfS}\right|_{\bfM_{N}=\bbfS_{N}} \left(\left.\frac{D M}{D\bfS}\right|_{\bfM_{N}=\bbfS_{N}} \right)^{-1} .
	\end{multline*}
	Moreover, from \eqref{eq:C2M}, we can write
	$$
	\left.\frac{D M}{D\bfS}\right|_{\bfM_{N}=\bbfS_{N}} = 
	\begin{pmatrix}
		1 & 0 & 0 & 0 & \cdots & 0\\
		0 & 1 & 0 & 0 & \cdots & 0\\
		1 & 0 & 1 & 0 & \cdots & 0\\
		S_3 & 3S_2 & \frac{3}{2}S_3 & 1 & \cdots & 0\\
		\vdots & \vdots & \vdots &  & \ddots & \\
		S_{N} & NS_{N-1} & \frac{N}{2}S_{N} & 0 & \cdots & 1\\
	\end{pmatrix} ,
	$$
	and then
	$$
	\left(\left.\frac{D M}{D\bfS}\right|_{\bfM_{N}=\bbfS_{N}} \right)^{-1} = 
	\begin{pmatrix}
		1 & 0 & 0 & 0 & \cdots & 0\\
		0 & 1 & 0 & 0 & \cdots & 0\\
		-1 & 0 & 1 & 0 & \cdots & 0\\
		(\frac{3}{2}-1)S_3 & -3S_2 & -\frac{3}{2}S_3 & 1 & \cdots & 0\\
		\vdots & \vdots & \vdots &  & \ddots & \\
		(\frac{N}{2}-1)S_{N} & -NS_{N-1} & -\frac{N}{2}S_{N} & 0 & \cdots & 1
	\end{pmatrix} .
	$$
	Furthermore, from \eqref{eq:C2p}, we can write
	$$
	\left.\frac{D M_{N+1}}{D\bfS}\right|_{\bfM_{N}=\bbfS_{N}} 
	= \left(S_{N+1},(N+1)S_{N},\frac{N+1}{2}S_{N+1},\frac{\partial S_{N+1}}{\partial S_3},\dots,\frac{\partial S_{N+1}}{\partial S_{N}}\right).
	$$
	Multiplying this row by the previous matrix concludes the proof.
\end{proof}

\subsection{Properties of the characteristic polynomial}

From the definition in \cref{ssec:lin} of the linear functional $\langle . \rangle$ associated with the moment vector $\bbfS_{N}$, one has
\begin{equation}\label{eq:pX2}
\langle P_{N+1} \rangle = 
c_{N+1} S_{N+1} + c_{N} S_{N} + \dots  + c_3 S_3 + c_2 + c_0 .
\end{equation}
If we define the polynomial $P_{N+1}'(X)$  by
\begin{equation}\label{Appell2}
P_{N+1}'(X) 
= \sum_{m=1}^{N+1} m c_{m} X^{m-1}  ,
\end{equation}
then the properties \eqref{charPcoeff} of the coefficients of the scaled characteristic polynomial provide directly the following three constraints:
\begin{corollary}
The scaled characteristic polynomial $P_{N+1}$ defined by \eqref{eq:polychar} is such that
\begin{equation}\label{3const}
	\langle P_{N+1} \rangle = 0, \quad 
	\langle P_{N+1}' \rangle = 0, \quad
	\langle X P_{N+1}' \rangle = 0 .
\end{equation}
\end{corollary}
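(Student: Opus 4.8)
The plan is to derive the three identities in \eqref{3const} directly from the explicit formulas for the coefficients $c_m$ given in \eqref{charPcoeff}, using the definition of the linear functional $\langle \cdot \rangle = \langle \cdot \rangle_{\bbfS_{N}}$ together with the normalization $(S_0,S_1,S_2)=(1,0,1)$. First I would evaluate $\langle P_{N+1}\rangle$ using \eqref{eq:pX2}: since $\langle X^m\rangle = S_m$ with $S_0=1$, $S_1=0$, $S_2=1$, we have $\langle P_{N+1}\rangle = \sum_{m=0}^{N+1} c_m S_m = c_0\cdot 1 + c_1\cdot 0 + c_2\cdot 1 + \sum_{m=3}^{N+1} c_m S_m = c_0 + c_2 + \sum_{m=3}^{N+1} c_m S_m$. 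The last line of \eqref{charPcoeff} states exactly that $c_0 = -c_2 - \sum_{m=3}^{N+1} S_m c_m$, so this sum vanishes. That disposes of the first constraint.

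Next, for $\langle P_{N+1}'\rangle$, I would apply the functional to \eqref{Appell2}: $\langle P_{N+1}'\rangle = \sum_{m=1}^{N+1} m c_m \langle X^{m-1}\rangle = \sum_{m=1}^{N+1} m c_m S_{m-1}$. Writing out the low-order terms, $S_0=1$ contributes $1\cdot c_1\cdot 1$ from $m=1$; $S_1=0$ kills the $m=2$ term; and $m=3,\dots,N+1$ give $\sum_{m=3}^{N+1} m c_m S_{m-1}$. Hence $\langle P_{N+1}'\rangle = c_1 + \sum_{m=3}^{N+1} m S_{m-1} c_m$, which is zero precisely by the formula $c_1 = -\sum_{m=3}^{N+1} m S_{m-1} c_m$ in \eqref{charPcoeff}. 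Similarly, $\langle X P_{N+1}'\rangle = \sum_{m=1}^{N+1} m c_m \langle X^m\rangle = \sum_{m=1}^{N+1} m c_m S_m$; here the $m=1$ term is $c_1 S_1 = 0$, the $m=2$ term is $2 c_2 S_2 = 2 c_2$, and the rest is $\sum_{m=3}^{N+1} m S_m c_m$, so $\langle X P_{N+1}'\rangle = 2c_2 + \sum_{m=3}^{N+1} m S_m c_m$, which vanishes by $c_2 = -\tfrac12\sum_{m=3}^{N+1} m S_m c_m$.

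There is essentially no obstacle here: the corollary is a direct bookkeeping consequence of \cref{theorem1}, and each of the three identities matches one of the three nontrivial coefficient formulas for $c_0$, $c_1$, $c_2$ respectively. The only point requiring a little care is keeping track of which powers $X^{m-1}$ or $X^m$ pick up the special values $S_0=1$, $S_1=0$, $S_2=1$, and making sure the index ranges in the sums from \eqref{charPcoeff} (which start at $m=3$) line up with the contributions coming from $m=1,2$ in the functional evaluations. I would present the argument as three short displayed computations, one per identity, each closed by citing the relevant line of \eqref{charPcoeff}.
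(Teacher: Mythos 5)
Your computations are correct and follow exactly the route the paper intends: the corollary is stated as a direct consequence of the coefficient formulas \eqref{charPcoeff}, and your three evaluations of the functional (tracking the special values $S_0=1$, $S_1=0$, $S_2=1$) match each identity to the formulas for $c_0$, $c_1$, $c_2$ respectively. This is essentially the same argument the paper leaves implicit.
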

As shown in \cref{HyQMOM}, these constraints are used to define the HyQMOM closure.

\section{Quadrature-based moment closure for $S_{2n+1}$}\label{sec:mc}

In general, QBMM provide a closure for higher-order moments in terms of the known lower-order moments. For example, $M_{2n}$ given $\bfM_{2n-1}$, or $M_{2n+1}$ given $\bfM_{2n}$. Without loss of generality, we describe QBMM using the standardized moments. Before defining the moment closure for $S_{2n+1}$ used in HyQMOM, we first review the quadrature method of moments (QMOM), which provides a closure for $S_{2n}$ in such a way that $\bbfS_{2n}$ is on the boundary of moment space.

\subsection{QMOM closure for $S_{2n}$ and orthogonal polynomials}

QMOM considers moments up to $S_{2n-1}$ with the closure for $S_{2n}$ found from $H_{2n}=0$, i.e., on the boundary of moment space. 
In this case, the unique VDF has the form of a sum of weighted Dirac delta functions located at the roots of the polynomial $Q_n$ defined just below.
Indeed, to compute the quadrature points, it is interesting to introduce the family of monic orthogonal polynomials $Q_n$, $\text{deg}(Q_n)=n$, for the scalar product defined in \cref{ssec:lin}, i.e., $\langle Q_m Q_n \rangle = \langle Q_n^2 \rangle \delta_{mn}$.
Moreover, $\langle Q_n^2 \rangle = H_{2n}/H_{2n-2}$.
This family satisfies the following recurrence relation 
\begin{equation}\label{Qn+1}
	Q_{n+1}(X) = (X - a_n) Q_n(X) - b_n Q_{n-1}(X)
\end{equation}
with $Q_{-1} = 0$ and $Q_{0} = 1$.
The recurrence coefficients $a_n$ and $b_n$ can be found from the standardized moments using the Chebyshev algorithm \cite{w1974,gautschi04,Chebyshev1859}, which is given in \cref{CBA}.
They are related to the orthogonal polynomials by
\begin{equation}\label{Qcoeff}
	a_n = \frac{\langle X Q_n^2 \rangle}{\langle Q_n^2 \rangle}, \quad
	b_n = \frac{\langle Q_n^2 \rangle}{\langle Q_{n-1}^2 \rangle} 
	    = \frac{H_{2n} H_{2n-4}}{H_{2n-2}^2}.
\end{equation}
The first few are
	$a_0 = 0 $, 
	$a_1 = S_3 $, 
	$a_2 = \frac{S_5  -  S_3 (2 + S_3^2 + 2 H_4)}{H_4} $, 
	$b_0 = 1 $,
	$b_1 = 1 $, 
	$b_2 = H_4 $, and
	$b_3 = {H_6}/{H_4^2} $.
For QMOM with a given $n$, starting from the standardized moments up to $S_{2n-1}$, the Chebyshev algorithm computes the recurrence coefficients up to $a_{n-1}$ and $b_{n-1}$ to define $Q_n$. Note that $a_n$ depends on standardized moments up to $S_{2n+1}$, with the highest-order moment having a linear dependence; and $b_n$ depends on standardized moments up to $S_{2n}$. The $b_n$ are positive except at the boundary of moment space where they can be zero.

\begin{remark}
	For Gaussian moments, $Q_n$ is the monic Hermite polynomial $He_n$.
\end{remark} 

\begin{remark}\label{rem:linkab}
	The relation (\ref{eq:link_func}) between $\langle.\rangle$ and $\langle.\rangle_{\bfM_{2n}}$ induces that the orthogonal monic polynomials $\bQ_k$ related to the scalar product $(p,q)\mapsto \langle pq\rangle_{\bfM_{2n}}$ on $\RR[X]_{2n}$ are such that 
	\begin{equation}\label{eq:linkQ}
	\bQ_k(X)=C_2^{k/2}Q_k\left(\frac{X-\bu}{\sqrt{C_2}}\right) ,
	\end{equation}
	and the corresponding coefficients $\ba_n$ and $\bb_n$ are
	\begin{align}\label{eq:linka}
	\ba_n &= \frac{\langle X \bQ_n^2 \rangle_{\bfM_{2n}}}{\langle \bQ_n^2 \rangle_{\bfM_{2n}}} 
	= \frac{\langle \bu +\sqrt{C_2}X Q_n^2 \rangle}{\langle Q_n^2 \rangle} 
	= \bu +\sqrt{C_2}a_k,\\
	\label{eq:linkb}
	\bb_n &= \frac{\langle \bQ_n^2 \rangle_{\bfM_{2n}}}{\langle \bQ_{n-1}^2 \rangle_{\bfM_{2n}}} 
	= C_2\frac{\langle Q_n^2 \rangle}{\langle Q_{n-1}^2 \rangle} 
	= C_2 b_n.
        \end{align}
\end{remark} 

In general, if the standardized moments correspond to a strictly realizable moment set, then there exists a one-to-one relationship between the moment vector $\bbfS_{2n-1}$
and the recurrence coefficients $(a_1,\dots, a_{n-1}, b_2,\dots,b_{n-1})$. 
Thus, the quadrature-based moment closure can be expressed equivalently in terms of the standardized moments or the recurrence coefficients.  
We will use this fact when defining the HyQMOM closure in \cref{HyQMOM}.
Moreover, for all $n=1,2,\dots$; $S_{2n}$ and $S_{2n+1}$ expressed in terms of $\bfY = (a_1, b_2, a_2, \dots, a_{n-1}, b_n, a_n)$ are multivariate polynomials (see \cref{CBA}), as are the components of the vectors of partial derivatives $\dfrac{D S_{2n}}{D \bfY}$ and $\dfrac{D S_{2n+1}}{D \bfY}$.

Finally, the QMOM closure for $S_{2n}$ corresponds to setting $b_n=0$ to find $S_{2n}$.
The form of the characteristic polynomial $P_{2n}$ for the corresponding system were given in \cite{kah11_cms,hly2020}.
\begin{theorem}
The QMOM closure $b_n=0$ induces the following characteristic polynomial $P_{2n} = Q_n^2$ and the system is only weakly hyperbolic.
\end{theorem}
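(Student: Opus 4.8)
The plan is to compute the characteristic polynomial of the companion Jacobian \eqref{Jacmatrix1} directly, after rewriting the $b_n=0$ closure for $M_{2n}$ as a linear combination of $M_0,\dots,M_{2n-1}$ whose coefficients are those of the polynomial $X^{2n}-Q_n(X)^2$.

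First I would reduce to the standardized setting exactly as in the proof of \cref{theorem1}: by \cref{th:dependence} one may take $(M_0,\bu,C_2)=(1,0,1)$, so that $\bfM_{2n-1}=\bbfS_{2n-1}$, $\bQ_n=Q_n$, and $P_{2n}$ equals the characteristic polynomial of the companion matrix in \eqref{Jacmatrix1},
\[
P_{2n}(X)=X^{2n}-\sum_{k=0}^{2n-1}\frac{\partial M_{2n}}{\partial M_k}\,X^{k}.
\]
Next, \eqref{Qcoeff} gives $\langle Q_n^2\rangle=b_n\langle Q_{n-1}^2\rangle$ with $\langle Q_{n-1}^2\rangle>0$ (strict realizability of $\bbfS_{2n-1}$), so $b_n=0$ is equivalent to $\langle Q_n^2\rangle=0$ (equivalently $H_{2n}=0$). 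Since $Q_n^2$ is monic of degree $2n$, write $X^{2n}-Q_n(X)^2=:\sum_{m=0}^{2n-1}\alpha_m X^{m}$, where the $\alpha_m$ depend on $\bfM_{2n-1}$ only through $Q_n$. Because $X^{2n}-Q_n^2$ has degree at most $2n-1$ and $\langle Q_n^2\rangle=0$, the closure reads
\[
M_{2n}=\langle X^{2n}\rangle=\langle Q_n^2\rangle+\big\langle X^{2n}-Q_n^2\big\rangle=\sum_{m=0}^{2n-1}\alpha_m M_m ,
\]
which expresses $M_{2n}$ as a function of $\bfM_{2n-1}$.

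Differentiating and using $\partial_{M_k}(X^{2n}-Q_n^2)=-2\,Q_n\,\partial_{M_k}Q_n$ gives
\[
\frac{\partial M_{2n}}{\partial M_k}=\alpha_k+\sum_{m=0}^{2n-1}M_m\frac{\partial\alpha_m}{\partial M_k}=\alpha_k+\big\langle \partial_{M_k}(X^{2n}-Q_n^2)\big\rangle=\alpha_k-2\,\langle Q_n\,\partial_{M_k}Q_n\rangle .
\]
Since $Q_n$ stays monic under perturbations of the moments, $\partial_{M_k}Q_n$ has degree at most $n-1$, hence lies in $\mathrm{span}\{Q_0,\dots,Q_{n-1}\}$, and the last term vanishes by orthogonality of $Q_n$. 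Thus $\partial M_{2n}/\partial M_k=\alpha_k$ for all $k$, and
\[
P_{2n}(X)=X^{2n}-\sum_{k=0}^{2n-1}\alpha_k X^{k}=Q_n(X)^2 .
\]

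For the last assertion, $Q_n$ has $n$ simple real roots whenever $\bbfS_{2n-1}$ is strictly realizable (the symmetric tridiagonal Jacobi matrix with positive subdiagonal entries $\sqrt{b_1},\dots,\sqrt{b_{n-1}}$ has simple spectrum), so $P_{2n}=Q_n^2$ has only real roots, each of algebraic multiplicity two. A companion matrix is non-derogatory --- its minimal polynomial equals its characteristic polynomial --- hence diagonalizable if and only if that polynomial is squarefree; $Q_n^2$ is not, so $D\bfF/D\bfM$ admits no complete eigenbasis and the system is weakly, but not strictly, hyperbolic, in agreement with \cite{kah11_cms,hly2020}. The step I expect to require the most care is the rewriting of the $b_n=0$ closure as $M_{2n}=\sum_m\alpha_m M_m$ (i.e.\ the boundary/quadrature characterization of the closure) and the degree count forcing $\langle Q_n\,\partial_{M_k}Q_n\rangle=0$; the remainder is routine linear algebra and standard orthogonal-polynomial theory.
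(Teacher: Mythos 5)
Your proof is correct, but it is worth noting that the paper itself offers no proof of this theorem: it simply states the result and points to \cite{kah11_cms,hly2020} for the form of $P_{2n}$. What you supply is therefore a genuine, self-contained argument rather than a variant of the paper's. Your route is in the spirit of the paper's \cref{theorem1}: reduce to the standardized frame via \cref{th:dependence}, read the characteristic polynomial off the companion matrix \eqref{Jacmatrix1}, and identify the last-row entries. The two key observations --- that the $b_n=0$ (equivalently $\langle Q_n^2\rangle=0$) closure is exactly $M_{2n}=\sum_m\alpha_m M_m$ with the $\alpha_m$ the coefficients of $X^{2n}-Q_n^2$, and that the extra term $\langle Q_n\,\partial_{M_k}Q_n\rangle$ vanishes because $\partial_{M_k}Q_n$ has degree at most $n-1$ (monicity is preserved under perturbation) while the orthogonality relations defining $Q_n$ involve only moments up to order $2n-1$ and so are unaffected by the closed moment --- are both sound. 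The weak-hyperbolicity conclusion via the non-derogatory property of companion matrices is also correct and, since the Jacobian retains companion form at every interior point and its characteristic polynomial is $\bQ_n^2$ there by \cref{th:dependence} and \eqref{eq:linkQ}, it applies globally, not just at the standardized state. In short: a complete and slightly stronger deliverable than what the paper records, obtained by the same coefficient-identification machinery the paper uses for \cref{theorem1}.
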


\subsection{Preliminary results}
Before defining the HyQMOM closure, we will first need the following relations for the monic orthogonal polynomials $Q_n$:
\begin{lemma}\label{lemmaa2}
	For all $n=0,1,\dots$;
	\begin{equation}
		\frac{\langle X Q_{n+1}' Q_{n} \rangle}{\langle Q_{n}^2 \rangle} = \sum_{k=0}^{n}a_k,
		\qquad
		\frac{\langle X^2 Q_n' Q_{n} \rangle }{\langle Q_{n}^2 \rangle}= n a_n+\sum_{k=0}^{n-1}a_k.
	\end{equation}
\end{lemma}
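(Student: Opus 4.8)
The plan is to prove both identities by expanding a suitable degree-$(n+1)$ polynomial in the orthogonal basis $Q_0,\dots,Q_{n+1}$ and extracting the coefficient of $Q_n$ by comparing the coefficients of $X^n$ on the two sides. The only preliminary fact needed is the value of the sub-leading coefficient of $Q_m$: writing $Q_m(X)=X^m-s_mX^{m-1}+\cdots$, the recurrence \eqref{Qn+1} gives $s_{m+1}=s_m+a_m$ with $s_0=0$, hence $s_m=\sum_{k=0}^{m-1}a_k$ (equivalently, $s_m$ is the sum of the roots of $Q_m$).

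For the first identity, since $Q_{n+1}$ is monic the polynomial $XQ_{n+1}'$ has degree $n+1$ and leading coefficient $n+1$, so it can be written as $XQ_{n+1}'=(n+1)Q_{n+1}+\gamma_nQ_n+\sum_{j=0}^{n-1}\gamma_jQ_j$. Pairing with $Q_n$ and using orthogonality gives $\langle XQ_{n+1}'Q_n\rangle=\gamma_n\langle Q_n^2\rangle$, so it suffices to compute $\gamma_n$. Matching the coefficients of $X^n$: the left-hand side contributes $-ns_{n+1}$, the term $(n+1)Q_{n+1}$ contributes $-(n+1)s_{n+1}$, the term $\gamma_nQ_n$ contributes $\gamma_n$, and the lower-degree $Q_j$ contribute nothing; therefore $\gamma_n=s_{n+1}=\sum_{k=0}^{n}a_k$.

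The second identity is obtained in exactly the same way with $X^2Q_n'$ in place of $XQ_{n+1}'$. For $n\ge 1$ this polynomial has degree $n+1$ and leading coefficient $n$, so $X^2Q_n'=nQ_{n+1}+\delta_nQ_n+\sum_{j=0}^{n-1}\delta_jQ_j$ and $\langle X^2Q_n'Q_n\rangle=\delta_n\langle Q_n^2\rangle$. Comparing $X^n$ coefficients gives $-(n-1)s_n=-ns_{n+1}+\delta_n$, i.e. $\delta_n=ns_{n+1}-(n-1)s_n$; substituting $s_{n+1}=s_n+a_n$ yields $\delta_n=s_n+na_n=na_n+\sum_{k=0}^{n-1}a_k$. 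The case $n=0$ (and, for the second identity, the empty sum when $n\le 1$) follows at once from $Q_0=1$, $Q_1=X-a_0$.

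There is no serious obstacle here; the work is essentially bookkeeping. The one point to be careful about is tracking the $X^n$ coefficients of $Q_{n+1}$ and of the differentiated polynomials $Q_{n+1}'$ and $Q_n'$, and observing that the $Q_j$ with $j<n$ in the orthogonal expansion cannot affect this coefficient. It is also worth noting that every step is an algebraic identity in the recurrence coefficients $a_k,b_k$, so the lemma holds whenever these (equivalently, the moments up to the order implicit in the left-hand sides) are defined.
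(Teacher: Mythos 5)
Your proof is correct, and it takes a genuinely different route from the paper's. The paper works entirely inside the inner product: it uses the three-term recurrence $XQ_n=Q_{n+1}+a_nQ_n+b_nQ_{n-1}$ together with the differentiated recurrence $Q_{n+1}'=Q_n+(X-a_n)Q_n'-b_nQ_{n-1}'$ to derive the relation $\frac{\langle XQ_{n+1}'Q_n\rangle}{\langle Q_n^2\rangle}=a_n+\frac{\langle XQ_n'Q_{n-1}\rangle}{\langle Q_{n-1}^2\rangle}$, which telescopes (an induction in $n$) to $\sum_{k=0}^n a_k$, and then reduces the second identity to the first. You instead work at the level of polynomial coefficients: you expand $XQ_{n+1}'$ (resp.\ $X^2Q_n'$) in the monic basis $Q_0,\dots,Q_{n+1}$, use orthogonality only to isolate the coefficient of $Q_n$, and compute that coefficient by matching the $X^n$ terms via the sub-leading coefficient $s_m=\sum_{k=0}^{m-1}a_k$ (itself an immediate consequence of the recurrence). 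This avoids the induction entirely and makes both identities a single two-line coefficient comparison; the paper's route stays closer to the inner-product identities \eqref{Qcoeff} that are reused in \cref{theorema2}, but is otherwise no shorter. Your closing remark is the right caveat: the pairings $\langle XQ_{n+1}'Q_n\rangle$ and $\langle Q_{n+1}Q_n\rangle$ involve a polynomial of degree $2n+1$, so the functional must be understood as extended by the moment of order $2n+1$ consistent with $a_n=\langle XQ_n^2\rangle/\langle Q_n^2\rangle$, exactly as in the context where the lemma is applied.
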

\begin{proof}
	First, let us remark that $X Q_{n} = Q_{n+1} + a_{n}Q_{n}+ b_nQ_{n-1}$, so that
	$$
	\langle X Q_{n+1}' Q_{n} \rangle 
	= (n+1) a_n \langle Q_{n}^2 \rangle+b_n \langle Q_{n+1}' Q_{n-1} \rangle,
	$$
	since each $Q_k$ is monic and orthogonal to any polynomial of degree at most $k-1$.
	Moreover, since $Q_{n+1}' = Q_n + (X-a_n)Q_n' -b_nQ_{n-1}'$, one can deduce:
	$$
	\langle Q_{n+1}' Q_{n-1} \rangle = \langle X Q_{n}' Q_{n-1} \rangle - n a_n \langle Q_{n-1}^2 \rangle .
	$$
	Then, using $b_n=\frac{\langle Q_{n}^2 \rangle}{\langle Q_{n-1}^2 \rangle}$:
	$$
	\frac{\langle X Q_{n+1}' Q_{n} \rangle}{\langle Q_{n}^2 \rangle}  
	= (n+1) a_n +\frac{ \langle X Q_{n}' Q_{n-1} \rangle - n a_n \langle Q_{n-1}^2 \rangle  }{\langle Q_{n-1}^2 \rangle}
	= a_n +\frac{ \langle X Q_{n}' Q_{n-1} \rangle }{\langle Q_{n-1}^2 \rangle},
	$$
	which allows to prove the first equality.
	
	For the second one, we still use $X Q_{n} = Q_{n+1} + a_{n}Q_{n}+ b_nQ_{n-1}$ to find
	$$
	\langle X^2 Q_n' Q_{n} \rangle = n a_n\langle Q_n^2 \rangle + b_n\langle X Q_n' Q_{n-1} \rangle
	= \left(n a_n + \frac{ \langle X Q_{n}' Q_{n-1} \rangle }{\langle Q_{n-1}^2 \rangle} \right)\langle Q_n^2 \rangle.
	$$
	Thanks to the first equality, this concludes the proof.
\end{proof}

Finally, the following result is needed to relate a generalized recurrence relation involving $Q_n$ to the constraints in \eqref{3const}:
\begin{theorem}\label{theorema2}
	For all $n=1,2,\dots$; let the monic polynomial $P_{2n+1}$ be given by
	\begin{equation}	
		P_{2n+1} = Q_n\left[ (X-\alpha_n)Q_n -\beta_nQ_{n-1} \right]
	\end{equation}
	where $\alpha_n$ and $\beta_n$ are some real numbers.
	Then, the following statements are equivalent:
	\begin{enumerate}[label=(\roman*)]
		\item $\langle P_{2n+1} \rangle =0$, $\langle P_{2n+1}' \rangle =0$ and $ \langle X P_{2n+1}' \rangle=0$.
		\item $\displaystyle\alpha_n=a_n = \frac{1}{n}\sum_{k=0}^{n-1}a_k$ and $\displaystyle\beta_n=\frac{2n+1}{n}b_n$.
	\end{enumerate}
\end{theorem}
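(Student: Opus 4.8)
The plan is to expand $P_{2n+1}$ in the orthogonal basis and evaluate each of the three functionals in (i) explicitly, thereby reducing (i) to a few scalar equations in $\alpha_n$ and $\beta_n$, with the moment-determined quantities $a_n$, $b_n$ and $\sigma := \sum_{k=0}^{n-1}a_k$ playing the role of parameters. Writing $P_{2n+1} = XQ_n^2 - \alpha_n Q_n^2 - \beta_n Q_n Q_{n-1}$, the orthogonality $\langle Q_nQ_{n-1}\rangle = 0$ together with the definition $a_n = \langle XQ_n^2\rangle/\langle Q_n^2\rangle$ gives at once $\langle P_{2n+1}\rangle = (a_n - \alpha_n)\langle Q_n^2\rangle$. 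Differentiating, $P_{2n+1}' = Q_n^2 + 2XQ_nQ_n' - 2\alpha_n Q_nQ_n' - \beta_n Q_n'Q_{n-1} - \beta_n Q_nQ_{n-1}'$, and I evaluate each term by the monic/orthogonality bookkeeping already used in the proof of \cref{lemmaa2}: $\langle Q_nQ_n'\rangle = 0$ and $\langle Q_nQ_{n-1}'\rangle = 0$ since $Q_n'$ has degree $\le n-1$ and $Q_{n-1}'$ degree $\le n-2$; while $\langle XQ_nQ_n'\rangle = n\langle Q_n^2\rangle$ and $\langle Q_n'Q_{n-1}\rangle = n\langle Q_{n-1}^2\rangle$ because $XQ_n' - nQ_n$ and $Q_n' - nQ_{n-1}$ have degrees $\le n-1$ and $\le n-2$, respectively. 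Collecting terms and using $\langle Q_n^2\rangle = b_n\langle Q_{n-1}^2\rangle$ yields $\langle P_{2n+1}'\rangle = [(2n+1)b_n - n\beta_n]\langle Q_{n-1}^2\rangle$.

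For the last functional I multiply $P_{2n+1}'$ by $X$ and invoke \cref{lemmaa2} for the two non-elementary terms: $\langle X^2Q_nQ_n'\rangle = (na_n + \sigma)\langle Q_n^2\rangle$ and $\langle XQ_n'Q_{n-1}\rangle = \sigma\langle Q_{n-1}^2\rangle$ (the second identity of \cref{lemmaa2} at index $n$, the first at index $n-1$). Together with $\langle XQ_nQ_{n-1}'\rangle = 0$ (since $XQ_{n-1}'$ has degree $\le n-1$), $\langle XQ_n^2\rangle = a_n\langle Q_n^2\rangle$ and $\langle XQ_nQ_n'\rangle = n\langle Q_n^2\rangle$, and once more $\langle Q_n^2\rangle = b_n\langle Q_{n-1}^2\rangle$, this gives
\begin{equation*}
\langle XP_{2n+1}'\rangle = \Bigl\{\,\bigl[(2n+1)a_n + 2\sigma - 2n\alpha_n\bigr]b_n - \beta_n\sigma\,\Bigr\}\langle Q_{n-1}^2\rangle .
\end{equation*}

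Since we work in the interior of moment space, $\langle Q_k^2\rangle > 0$ for $k \le n$ (equivalently $b_n > 0$), so each relation in (i) is equivalent to the vanishing of the corresponding bracket: $\langle P_{2n+1}\rangle = 0 \iff \alpha_n = a_n$, and $\langle P_{2n+1}'\rangle = 0 \iff \beta_n = \tfrac{2n+1}{n}b_n$. Substituting these two into the third relation and dividing by $b_n>0$ turns $\langle XP_{2n+1}'\rangle = 0$ into $a_n + 2\sigma = \tfrac{2n+1}{n}\sigma$, i.e.\ $a_n = \tfrac{1}{n}\sigma = \tfrac{1}{n}\sum_{k=0}^{n-1}a_k$; conversely, if all three identities in (ii) hold, the three brackets vanish. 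Reading the implications both ways gives (i) $\iff$ (ii). The only delicate point is the functional bookkeeping in the two differentiated expressions — tracking which products survive orthogonality — but \cref{lemmaa2} supplies exactly the two evaluations that are not immediate, so no genuinely new computation is required; the structural content is that the first two constraints pin down $\alpha_n$ and $\beta_n$ uniquely, and the third then forces the compatibility condition $na_n = \sum_{k=0}^{n-1}a_k$ on the moments themselves.
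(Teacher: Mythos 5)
Your proposal is correct and follows essentially the same route as the paper's proof: it evaluates $\langle P_{2n+1}\rangle$, $\langle P_{2n+1}'\rangle$, and $\langle XP_{2n+1}'\rangle$ via orthogonality and the two identities of \cref{lemmaa2}, arriving at the same three bracketed expressions (your third one is the paper's \eqref{eq:rel3} after substituting $\langle Q_n^2\rangle = b_n\langle Q_{n-1}^2\rangle$). The only difference is that you spell out the degree-counting justifications slightly more explicitly; the logic and conclusion are identical.
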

\begin{proof}
	It is easy to see that
	\begin{equation}\label{eq:rel1}
		\langle P_{2n+1}  \rangle = \langle X Q_n^2  \rangle - \alpha_n\langle Q_n^2  \rangle = (a_n-\alpha_n)\langle Q_n^2  \rangle.
	\end{equation}
	Moreover, since $ P_{2n+1}' = 2(X-\alpha_n)Q_n'Q_n +Q_n^2 - \beta_n ( Q_n'Q_{n-1}+Q_n Q_{n-1}' )$,
	\begin{equation}\label{eq:rel2}
		\langle P_{2n+1}'  \rangle = 2n\langle Q_n^2\rangle + \langle Q_n^2    \rangle - n\beta_n\langle Q_{n-1}^2  \rangle 
		=\left[ (2n+1)b_{n} - n\beta_n \right] \langle Q_{n-1}^2 \rangle .
	\end{equation}
	And finally
	$$
	\langle X P_{2n+1}'  \rangle = 2 \langle X^2 Q_n'Q_n  \rangle -2n\alpha_n\langle Q_{n}^2  \rangle +\langle X Q_{n}^2  \rangle
	- \beta_n\langle X Q_n'Q_{n-1}  \rangle.
	$$
	Using \eqref{Qcoeff} and \cref{lemmaa2}, this leads to 
	\begin{equation}\label{eq:rel3}
		\langle X P_{2n+1}'  \rangle = \left[ (2n+1)a_n + 2\sum_{k=0}^{n-1}a_k -2n\alpha_n - \frac{\beta_n}{b_n}\sum_{k=0}^{n-1}a_k \right]\langle Q_{n}^2  \rangle.
	\end{equation}
	Equations \eqref{eq:rel1}, \eqref{eq:rel2} and \eqref{eq:rel3} allow to conclude the proof.
\end{proof}

\subsection{HyQMOM closure for $S_{2n+1}$} \label{HyQMOM}

With HyQMOM, the moments up to $S_{2n}$ are known, and a closure for $S_{2n+1}$ is sought that makes the moment system globally hyperbolic. The choice is not unique (see \cref{PC2} for a discussion of the case with $n=2$), so we favor closures that are relatively simple to compute for arbitrary $n$, and for which global hyperbolicity can be demonstrated explicitly for $n\le 9$ and is postulated for larger values of $n$.

\begin{theorem}[HyQMOM closure for $S_{2n+1}$]\label{theorem2}
	Let $Q_n$ be the monic orthogonal polynomial defined by \eqref{Qn+1}, with $Q_{-1}=0$, $Q_0=1$, and $R_{n+1}$ be the monic polynomial defined by
	\begin{equation}\label{Rn+1new}
		R_{n+1}(X) = (X - \alpha_n) Q_n(X) - \beta_n Q_{n-1}(X) .
	\end{equation}
	For all $n=1,2,\dots,9$; the scaled characteristic polynomial in \cref{theorem1} can be written as 
	\begin{equation}\label{decomposenew}
		P_{2n+1} (X) =  Q_n (X) R_{n+1} (X)
	\end{equation}
	if and only if the closure on $S_{2n+1}$, defined through the coefficient $a_{n}$, and the coefficients $\alpha_n$ and $\beta_n$ in \eqref{Rn+1new} are related to the recurrence coefficients $a_k$ and $b_k$ by
	\begin{equation}\label{alpha_beta}
		a_n=\alpha_n = \frac{1}{n} \sum_{k=0}^{n-1} a_k ,  \quad \beta_n = \frac{2n+1}{n} b_n .
	\end{equation}	
\end{theorem}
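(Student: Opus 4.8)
The plan is to prove the two implications with a common mechanism: the three linear constraints \eqref{3const} that any scaled characteristic polynomial satisfies (the corollary to \cref{theorem1}), together with \cref{theorema2}, which for a polynomial already written as $Q_n[(X-\alpha_n)Q_n-\beta_nQ_{n-1}]$ turns those constraints into the relations \eqref{alpha_beta}. The necessity direction is then immediate: if $P_{2n+1}=Q_nR_{n+1}$ with $R_{n+1}$ as in \eqref{Rn+1new}, then $P_{2n+1}$ has exactly that product form, it satisfies $\langle P_{2n+1}\rangle=\langle P_{2n+1}'\rangle=\langle XP_{2n+1}'\rangle=0$ by the corollary, and \cref{theorema2} forces $\alpha_n=a_n=\tfrac1n\sum_{k=0}^{n-1}a_k$ and $\beta_n=\tfrac{2n+1}{n}b_n$; since $S_{2n+1}\mapsto a_n=\langle XQ_n^2\rangle/\langle Q_n^2\rangle$ is affine with slope $1/\langle Q_n^2\rangle>0$ under strict realizability, pinning $a_n$ is the same as pinning $S_{2n+1}$, which is the claimed closure. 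No restriction on $n$ enters here.

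For sufficiency I would start from the closure $a_n=\tfrac1n\sum_{k=0}^{n-1}a_k$, set $\alpha_n:=a_n$, $\beta_n:=\tfrac{2n+1}{n}b_n$, and let $\widetilde P:=Q_nR_{n+1}$ with $R_{n+1}$ as in \eqref{Rn+1new}. By \cref{theorema2} (used in the other direction) $\widetilde P$ satisfies \eqref{3const}, and the true characteristic polynomial $P_{2n+1}$ does too; hence $D:=P_{2n+1}-\widetilde P$, which has degree at most $2n$ since both are monic of degree $2n+1$, satisfies the linear relations \eqref{3const}. Because $\langle1\rangle=1$, $\langle X\rangle=0$ and $\langle X^2\rangle=1$, these force the coefficients of $X^0,X^1,X^2$ in $D$ to vanish as soon as those of $X^3,\dots,X^{2n}$ do, so the entire claim reduces to proving, for $m=3,\dots,2n$, that the coefficient of $X^m$ in $Q_nR_{n+1}$ equals $-\partial S_{2n+1}/\partial S_m$. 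Expanding $Q_nR_{n+1}=(X-\alpha_n)Q_n^2-\beta_nQ_nQ_{n-1}$, invoking $\beta_n=\tfrac{2n+1}{n}b_n$ and $\langle Q_n^2\rangle=b_n\langle Q_{n-1}^2\rangle$, and cancelling the common $(X-\alpha_n)Q_n^2$ term, all these identities collapse to the single one
\[
(\text{coefficient of $X^m$ in }Q_nQ_{n-1})=(\partial_m\sigma)\,\langle Q_{n-1}^2\rangle,\qquad \partial_m:=\partial/\partial S_m,\ \ \sigma:=\textstyle\sum_{k=0}^{n-1}a_k=-(\text{coefficient of $X^{n-1}$ in }Q_n).
\]

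I would then establish this identity by implicit differentiation. On one side, $\partial S_{2n+1}/\partial S_m$ comes from the defining closure relation $\langle XQ_n^2\rangle=a_n^*\langle Q_n^2\rangle$ with $a_n^*:=\tfrac1n\sum_{k=0}^{n-1}a_k$: since $\partial_{S_{2n+1}}\langle XQ_n^2\rangle=1$ one has $\partial S_{2n+1}/\partial S_m=-\partial_m\bigl(\langle XQ_n^2\rangle-a_n^*\langle Q_n^2\rangle\bigr)$, and the right side is evaluated using $\deg(\partial_mQ_n)\le n-1$ and $Q_n\perp\RR[X]_{n-1}$, which give $\langle Q_n\partial_mQ_n\rangle=0$ and $\langle XQ_n\partial_mQ_n\rangle=-(\partial_m\sigma)\langle Q_n^2\rangle$ (the leading coefficient of $X\partial_mQ_n$ being $-\partial_m\sigma$). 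On the other side, differentiating the \emph{exact} orthogonality $\langle Q_nQ_{n-1}\rangle\equiv0$ and using $\deg(\partial_mQ_{n-1})\le n-2$ gives $0=(\text{coefficient of $X^m$ in }Q_nQ_{n-1})+\langle(\partial_mQ_n)Q_{n-1}\rangle$, and expanding $\partial_mQ_n$ in the orthogonal basis isolates its $Q_{n-1}$-component, which is the $X^{n-1}$-coefficient of $\partial_mQ_n$, i.e.\ $-\partial_m\sigma$; hence $\langle(\partial_mQ_n)Q_{n-1}\rangle=-(\partial_m\sigma)\langle Q_{n-1}^2\rangle$. Putting these together yields the identity, hence $D\equiv0$, which is \eqref{decomposenew} together with \eqref{alpha_beta}.

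I expect the main obstacle to be dependency bookkeeping rather than any individual computation: one must keep straight that $Q_n$ and $a_n^*$ depend only on $S_3,\dots,S_{2n-1}$, that $b_n$ depends on $S_3,\dots,S_{2n}$, and that $a_n$ (equivalently $S_{2n+1}$) is affine in $S_{2n+1}$ (resp.\ $a_n$), so that each $\partial_m$ acts on the right objects and all the orthogonality cancellations are valid; one must also keep strict realizability in force ($H_{2k}>0$, $b_n>0$) so that $Q_n$ and the scalar product exist and the closure relation is invertible. If one prefers not to run the argument uniformly in $n$, it is exactly at this step that the restriction $n\le 9$ in the statement is invoked and can be checked directly, the remainder being independent of $n$.
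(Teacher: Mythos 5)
Your proof is correct, and while the necessity direction coincides with the paper's (the corollary to \cref{theorem1} plus \cref{theorema2}), your sufficiency argument takes a genuinely different and stronger route. The paper proves \eqref{decomposenew} from the closure $a_n=\alpha_n$ by symbolic computation (Algorithm~\ref{al:verif}, Matlab code in \cref{matlabcode}) carried out case by case for $n=2,\dots,9$ — this is the sole reason the theorem is stated only for $n\le 9$ and hyperbolicity is merely ``postulated'' for larger $n$. You instead observe that both $P_{2n+1}$ and $Q_nR_{n+1}$ are monic, satisfy the three constraints \eqref{3const}, and hence differ by a polynomial of degree at most $2n$ whose coefficients of $X^0,X^1,X^2$ vanish once those of $X^3,\dots,X^{2n}$ do; matching the latter against \cref{theorem1} reduces everything to the single identity relating the $X^m$-coefficient of $Q_nQ_{n-1}$ to $(\partial\sigma/\partial S_m)\langle Q_{n-1}^2\rangle$ with $\sigma=\sum_{k=0}^{n-1}a_k$, which you obtain by differentiating the closure relation $\langle XQ_n^2\rangle=a_n^*\langle Q_n^2\rangle$ and the exact orthogonality $\langle Q_nQ_{n-1}\rangle=0$, using $\deg(\partial_mQ_k)\le k-1$, $[Q_n]_{n-1}=-\sigma$, and $\langle Q_n^2\rangle=b_n\langle Q_{n-1}^2\rangle$. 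I checked the two implicit-differentiation computations and the coefficient bookkeeping (including the boundary case $m=2n$, where $\partial_mQ_n=0$ and both sides reduce to $-(2n+1)a_n^*$), and they go through uniformly in $n$ under strict realizability. What your approach buys is substantial: it replaces a finite computer verification by a conceptual proof valid for all $n$, which would remove the $n\le 9$ restriction from the theorem and, combined with \cref{realroots}, establish global hyperbolicity for every $n$. The only caveat is that your write-up is a sketch; the degree and dependency arguments (which $S_m$ each of $Q_n$, $a_n^*$, $b_n$ depends on, and the identity $\partial_m\langle P\rangle=[P]_m+\langle\partial_mP\rangle$) need to be spelled out, but none of them hides a difficulty.
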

\begin{proof}
First, if  the scaled characteristic polynomial $P_{2n+1}$ is given by \eqref{decomposenew}, then 
equations \eqref{alpha_beta} follow directly from \cref{theorema2} and the properties \eqref{3const} of the characteristic polynomial. 

Conversely, using the closure $a_n = \alpha_n$, we just need to prove the relation \eqref{decomposenew}.
For that, it is easier to use the vector $\bfY = (a_1, b_2, \dots, a_{n-1}, b_n)$, which is uniquely defined from $(S_3,\dots,S_{2n})$.
We can then use the following relation to compute the coefficients of the characteristic polynomial $P_{2n+1}$:
\begin{equation}
(c_3,\dots,c_{2n})= -\frac{D S_{2n+1}}{D \bfY} \left(\frac{D (S_3,\dots,S_{2n})}{D\bfY}\right)^{-1}.
\end{equation}
Then, such computations are done using Matlab symbolic to check  \eqref{decomposenew} for $n=2,3,\dots,9$.
Algorithm~\ref{al:verif} gives the details of these computations and the Matlab source code can be found in \cref{matlabcode}.
For $n=1$ the result is obvious from the closure (see \cref{ssec:n=1}).
\end{proof}

\begin{algorithm2e}[tbp] 
	\caption{Verification of Theorem~\ref{theorem2}}\label{al:verif}
	\KwData{$(a_k)_{k=1,\dots,n-1}$,$(b_k)_{k=2,\dots,n}$}
	\BlankLine
	\KwResult{$P_{2n+1} -  Q_n  R_{n+1} $}
	\BlankLine
	$(a_0,b_0,b_1) \leftarrow (0,1,1)$\;
	$\displaystyle a_n \leftarrow \frac{1}{n} \sum_{k=0}^{n-1} a_k$\tcp*[f]{Set the closure}\;
	$(S_0,S_1,S_2) \leftarrow (1,0,1)$\;
	Initialize each scalar $Z_{k,p}$ to zero for $k\in\{-1,\dots,n\},p\in\{0,\dots,2n+1\}$\;
	$Z_{0,0}\leftarrow 1$ \tcp*[f]{Reverse Chebyshev algorithm}\;
	$Z_{0,1}\leftarrow 0$\;
	\For{$k\leftarrow 1$ \KwTo $n$}{
		$Z_{k,0}\leftarrow  b_kZ_{k-1,0}$\;
		$Z_{k,1} \leftarrow Z_{k,0}\left(a_k+ \dfrac{Z_{k-1,1}}{Z_{k-1,0}} \right) $\;
	}
	\For{$p\leftarrow 1$ \KwTo $2n$}{
		\For{$k\leftarrow 0$ \KwTo $\floor*{n+1-\frac{p}{2}}$}{
			$Z_{k,p+1} \leftarrow Z_{k+1,p-1} + a_{k}Z_{k,p}+b_{k}Z_{k-1,p+1}$\;
		}
		$S_{p+1}\leftarrow Z_{0,p+1} $\;
	}
	$\bfY \leftarrow (a_1, b_2, a_2, \dots, a_{n-1}, b_n)$\tcp*[f]{Computation of the $c_k$}\;
	$\displaystyle (c_3,\dots,c_{2n})\leftarrow -\frac{D S_{2n+1}}{D \bfY} 
	\left(\frac{D (S_3,\dots,S_{2n})}{D\bfY}\right)^{-1}$\;
	$\displaystyle (c_0,c_1,c_2)\leftarrow \left(\frac{1}{2} \sum^{2n+1}_{m=3} (m-2) S_{m} c_{m},
	- \sum_{m=3}^{2n+1} m S_{m-1} c_{m}, - \frac{1}{2} \sum^{2n+1}_{m=3} m S_{m} c_{m} \right)$\;
	$\displaystyle P_{2n+1}\leftarrow X^{2n+1} + \sum_{k=0}^{2n} c_k X^k$\;
	$Q_{-1}\leftarrow 0$\tcp*[f]{Computation of the polynomials $Q_k$}\;
	$Q_{0}\leftarrow 1$\;
	\For{$k\leftarrow 0$ \KwTo $n-1$}{
		$Q_{k+1}\leftarrow (X-a_k)Q_{k} - b_kQ_{k-1}$\;
	}
	$R_{n+1}\leftarrow (X-a_n)Q_{n} - \dfrac{2n+1}{n}b_nQ_{n-1}$\;
	return $P_{2n+1} -  Q_n  R_{n+1}$\tcp*[f]{Final verification}\;
\end{algorithm2e}

\begin{remark}
	For realizable moments, $\beta_n \ge 0$ in \eqref{Rn+1new}.
\end{remark}

The main result concerning global hyperbolicity is as follows.
\begin{theorem}\label{realroots}
	When $\beta_n > 0$, the $n+1$ roots of $R_{n+1}$ in \eqref{Rn+1new} are real-valued and bound and separate the $n$ roots of $Q_n$. 
\end{theorem}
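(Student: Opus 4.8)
The plan is to factor $R_{n+1}(X) = Q_n(X)\, g(X)$ with the rational function
\[
g(X) := (X-\alpha_n) - \beta_n\,\frac{Q_{n-1}(X)}{Q_n(X)},
\]
and then to locate the zeros of $R_{n+1}$ by a monotonicity analysis of $g$ on the intervals cut out by the roots of $Q_n$. This reduces the claim to three ingredients: (i) the classical fact that $Q_n$ and $Q_{n-1}$ have simple, real, strictly interlacing roots; (ii) positivity of the residues in the partial-fraction expansion of $Q_{n-1}/Q_n$; and (iii) an elementary sign and monotonicity count for $g$ on each subinterval.

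First I would record the structural facts available here. The hypothesis $\beta_n = \tfrac{2n+1}{n}b_n>0$ means $b_n>0$, and in the (standing) realizable setting a vanishing Hankel determinant would force all higher ones to vanish; hence $H_{2k}>0$ for $k\le n$, so $\langle\cdot,\cdot\rangle$ is a positive-definite inner product on $\mathbb{R}[X]_n$. Consequently $Q_n$ has $n$ simple real roots $x_1<\dots<x_n$, $Q_{n-1}$ has $n-1$ simple real roots, and the two root sets strictly interlace; in particular $Q_n'(x_j)\neq0$ and $Q_{n-1}(x_j)\neq0$ for every $j$, so the partial-fraction expansion
\[
\frac{Q_{n-1}(X)}{Q_n(X)} = \sum_{j=1}^{n} \frac{\lambda_j}{X-x_j}, \qquad \lambda_j = \frac{Q_{n-1}(x_j)}{Q_n'(x_j)},
\]
is valid.

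The hard part will be showing $\lambda_j > 0$ for all $j$; everything afterwards is routine. I would obtain it from the Christoffel--Darboux identity $\sum_{k=0}^{n-1}\langle Q_k^2\rangle^{-1}Q_k(X)Q_k(Y) = \langle Q_{n-1}^2\rangle^{-1}[Q_n(X)Q_{n-1}(Y) - Q_{n-1}(X)Q_n(Y)]/(X-Y)$: setting $Y=x_j$ and letting $X\to x_j$ (using $Q_n(x_j)=0$) gives $\sum_{k=0}^{n-1}\langle Q_k^2\rangle^{-1}Q_k(x_j)^2 = \langle Q_{n-1}^2\rangle^{-1}Q_n'(x_j)Q_{n-1}(x_j)$, whose left-hand side is strictly positive (the $k=0$ term alone equals $1/\langle Q_0^2\rangle = 1>0$); since $\langle Q_{n-1}^2\rangle>0$ this forces $\lambda_j = Q_{n-1}(x_j)/Q_n'(x_j) > 0$. (Equivalently, $\lambda_j$ is proportional to a Gauss--Christoffel quadrature weight and hence positive.)

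Finally I would analyze $g$ on the $n+1$ intervals $(-\infty,x_1),(x_1,x_2),\dots,(x_n,+\infty)$. Differentiating, $g'(X)=1+\beta_n\sum_j \lambda_j/(X-x_j)^2>0$ since $\beta_n,\lambda_j>0$, so $g$ is strictly increasing on each interval; moreover $g\to-\infty$ as $X\downarrow x_j$, $g\to+\infty$ as $X\uparrow x_j$, $g\to-\infty$ as $X\to-\infty$, and $g\to+\infty$ as $X\to+\infty$. Hence $g$ has exactly one zero $y_i$ in each interval, so $y_1<x_1<y_2<x_2<\dots<x_n<y_{n+1}$. Since $R_{n+1}=Q_n g$ as polynomials and $R_{n+1}(x_j)=-\beta_n Q_{n-1}(x_j)\neq0$, the zeros of $R_{n+1}$ are precisely $y_1,\dots,y_{n+1}$; being monic of degree $n+1$, $R_{n+1}$ has no others, so all its roots are real (and simple) and the chain $y_1<x_1<y_2<\dots<x_n<y_{n+1}$ is exactly the assertion that the roots of $R_{n+1}$ bound and separate those of $Q_n$. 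I would note in passing that this argument uses only $\beta_n>0$ and is insensitive to the value of $\alpha_n$.
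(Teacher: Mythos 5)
Your proof is correct. It reaches the interlacing conclusion by a route that is, at bottom, equivalent to the paper's but packaged differently. The paper applies the confluent Christoffel--Darboux formula directly to the pair $(Q_n,R_{n+1})$ to obtain the Wronskian inequality $Q_n R_{n+1}' - R_{n+1} Q_n' > 0$, and then reads off the sign alternation of $R_{n+1}$ at the consecutive roots of $Q_n$ together with the behavior at $\pm\infty$. You instead expand $Q_{n-1}/Q_n$ in partial fractions, prove positivity of the residues $\lambda_j$ (the Christoffel numbers) from the Christoffel--Darboux kernel at level $n-1$, and deduce strict monotonicity of $g = R_{n+1}/Q_n$ on each of the $n+1$ intervals. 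Since $g' = (Q_n R_{n+1}' - R_{n+1} Q_n')/Q_n^2$, your monotonicity statement is precisely the paper's Wronskian inequality in disguise, and the two endgames (one zero of $g$ per interval versus sign changes of $R_{n+1}$ at the $x_j$) are interchangeable. What your version buys: it invokes Christoffel--Darboux only for the genuinely orthogonal pair $(Q_{n-1},Q_n)$ rather than for the modified polynomial $R_{n+1}$ (for which the confluent identity, as opposed to the resulting inequality, requires a small adjustment when $\beta_n\neq b_n$), and it makes explicit that the argument depends only on $\beta_n>0$ and not on the value of $\alpha_n$ --- a fact the paper uses but does not emphasize. Your preliminary reduction (that $\beta_n>0$ forces strict realizability, hence a positive-definite inner product and simple interlacing roots for $Q_{n-1},Q_n$) is also sound and fills in a hypothesis the paper leaves implicit.
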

\begin{proof}
	When $\beta_n >0$ (and thus $b_n > 0$), the confluent form of the Christoffel--Darboux formula yields
	$$
	\sum_{k=0}^{n} \frac{Q_k^2(X)}{\langle Q_k^2 \rangle} =
	\frac{Q_n(X) R'_{n+1}(X) - R_{n+1}(X) Q_n'(X)}{\langle Q_n^2 \rangle}
	> 0 ,
	$$
	and thus
	\begin{equation}\label{cdconf}
	Q_n(X) R'_{n+1}(X) - R_{n+1}(X) Q_n'(X) > 0 .
	\end{equation} 
	The $n$ roots of $Q_n$ are real and distinct \cite{gautschi04} and denoted by $x_1<x_2<\dots<x_n$.
	For any two consecutive roots $x_k$ and $x_{k+1}$, \eqref{cdconf} implies that $$R_{n+1}(x_k) R_{n+1}(x_{k+1}) < 0.$$  
	Then the polynomial  $R_{n+1}$ has at least one root between $x_k$ and $x_{k+1}$, for each $k=1,\dots,n-1$.
	Moreover, $Q_n'(x_n)>0$, because of the behavior of $Q_n$ at $+\infty$. 
	Then $R_{n+1}(x_n)<0$ and $R_{n+1}$ has a root larger than $x_n$.
	In the same way, it has also a root smaller than $x_1$. This concludes the proof.
\end{proof}

\begin{remark}
	When $\beta_n=0$, $n$ roots of $R_{n+1}$ are shared with $Q_n$ and the root $\alpha_n$ has multiplicity of either 1 or 3. The latter occurs when $Q_{n}(\alpha_n)=0$, e.g., due to symmetry.
\end{remark}

In summary, the HyQMOM closure in \cref{theorem2} is globally hyperbolic for the moment system associated with the 1-D kinetic equation, and is well defined for any realizable moment vector $\bfM_{2n}$.

\subsection{Computation of $M_{2n+1}$ and the eigenvalues}\label{cs2np1}

Formulas for the closure $S_{2n+1}(S_3,\dots,S_{2n})$ could be given analytically.
This is done in the next section for $n=1$ and $n=2$, but these formulas becomes increasing more complicated due to the number of moments involved.
In the general case, the Chebyshev and reverse Chebyshev algorithms (see \cref{CBA}) can be used to compute $S_{2n+1}$.
But the transformation into standardized moment is not necessary, thanks to \cref{rem:linkab}, 
as well as \cref{th:dependence}.
Indeed, the characteristic polynomial $\bP_{2n+1}$ of the system of moments $\bfM_{2n}$ is linked to $P_{2n+1}$ by \eqref{eq:polychar}, the monic orthogonal polynomials $\bQ_k$ corresponding to $\bfM_{2n}$ are linked to the $Q_k$ by \eqref{eq:linkQ}. One can therefore define $\balpha_n = \bu +\sqrt{C_2} \alpha_n$, $\bbeta_n = \sqrt{C_2} \beta_n$ and $\bR_{n+1}(X) =C_2^{(n+1)/2} R_{n+1}\left(\frac{X-\bu}{\sqrt{C_2}}\right)$ in such a way that in \cref{theorem2}, the relations \eqref{alpha_beta} are equivalent to 
\begin{equation}\label{balpha_bbeta}
\balpha_n = \frac{1}{n} \sum_{k=0}^{n-1} \ba_k ,  \quad \bbeta_n = \frac{2n+1}{n} \bb_n,
\end{equation}
\eqref{Rn+1new} is equivalent to 
\begin{equation}
	\bR_{n+1}(X) = (X - \balpha_n) \bQ_n(X) - \bbeta_n \bQ_{n-1}(X) ,
\end{equation}
and \eqref{decomposenew}  is equivalent to 
\begin{equation}
	\bP_{2n+1} (X) =  \bQ_n (X) \bR_{n+1} (X).
\end{equation}
Then, $M_{2n+1}$ can be found from $\bfM_{2n}$ by Algorithm~\ref{al:closure}: the $\ba_k$ and $\bb_k$ are first computed from the moments by the Chebyshev algorithm, the value of $\ba_n$ is given by the closure $\ba_n=\balpha_n$, and the reverse Chebyshev algorithm then allows to compute $M_{2n+1}$.

The eigenvalues of the corresponding system, i.e., the roots of $\bQ_n$ and $\bR_{n+1}$, are then the eigenvalues of the following Jacobi matrices:
\begin{equation}\label{eq:JacobiQ}
\bfJ_n=
\begin{pmatrix}
	\ba_0         & \sqrt{\bb_1} &            &                &\\
	\sqrt{\bb_1}  & \ba_1        & \sqrt{\bb_2} &                &\\
                & \ddots     & \ddots     & \ddots	       &\\ 
                &            & \sqrt{\bb_{n-2}} & \ba_{n-2}        & \sqrt{\bb_{n-1}} \\ 
                &            &            & \sqrt{\bb_{n-1}} & \ba_{n-1}              
\end{pmatrix}
\end{equation}
and
\begin{equation}\label{eq:JacobiR}
\bfK_{n+1}=
\begin{pmatrix}
	\ba_0         & \sqrt{\bb_1} &            &                &\\
	\sqrt{\bb_1}  & \ba_1        & \sqrt{\bb_2} &                &\\
                & \ddots     & \ddots     & \ddots	       &\\ 
                &            & \sqrt{\bb_{n-1}} & \ba_{n-1}        & \sqrt{\bbeta_{n}} \\ 
                &            &            & \sqrt{\bbeta_n} & \balpha_n              
\end{pmatrix}.
\end{equation}

\begin{algorithm2e}[tbp] 
	\caption{Computation of $M_{2n+1}$}\label{al:closure}
	\KwData{$\bfM_{2n}$ strictly realizable}
	\BlankLine
	\KwResult{$M_{2n+1}$}
	\BlankLine
	Initialize each scalar $\sigma_{k,p}$ to zero for $k\in\{-1,\dots,n\},p\in\{0,\dots,2n+1\}$\;
	\For(\tcp*[f]{Chebyshev algorithm}){$p\leftarrow 0$ \KwTo $2n$}{
		$\sigma_{0,p} \leftarrow M_{p}$\;
	}
	$\ba_0 \leftarrow \frac{M_1}{M_0}$\;
	$\bb_0 \leftarrow 0$\;
	\For{$k\leftarrow 1$ \KwTo $n-1$}{
		\For{$p\leftarrow k$ \KwTo $2n-k$}{
			$\sigma_{k,p} \leftarrow \sigma_{k-1,p+1} - \ba_{k-1} \sigma_{k-1,p} -\bb_{k-1}  \sigma_{k-2,p}$\;
		}
		$\ba_{k} \leftarrow \dfrac{\sigma_{k,k+1}}{\sigma_{k,k}} - \dfrac{\sigma_{k-1,k}}{\sigma_{k-1,k-1}}$\;
		$\bb_{k} \leftarrow  \dfrac{\sigma_{k,k}}{\sigma_{k-1,k-1}}$ \;
	}
	$\sigma_{n,n} \leftarrow \sigma_{n-1,n+1} - \ba_{n-1} \sigma_{n-1,n} -\bb_{n-1}  \sigma_{n-2,n}$\;
	$\bb_n \leftarrow  \dfrac{\sigma_{n,n}}{\sigma_{n-1,n-1}}$\;
	$\displaystyle \ba_n \leftarrow \frac{1}{n}\sum_{k=0}^{n-1} \ba_k$\tcp*[f]{Set the closure}\;
	$\sigma_{n,n+1} \leftarrow \sigma_{n,n}\left(\ba_n+ \dfrac{\sigma_{n-1,n}}{\sigma_{n-1,n-1}} \right) $\tcp*[f]{Reverse Chebyshev algorithm}\;
	\For{$k\leftarrow n-1$ \KwTo $0$}{
		$\sigma_{k,2n-k+1} \leftarrow \sigma_{k+1,2n-k} + \ba_{k}\sigma_{k,2n-k}+\bb_{k}\sigma_{k-1,2n-k}$\;
	}
	$M_{2n+1} \leftarrow \sigma_{0,2n+1}$\;
	
\end{algorithm2e}

\section{Examples of the HyQMOM closure for $n \le 5$} \label{sec:example}

In this section, we apply the HyQMOM closure for $S_{2n+1}$ from \cref{theorem2} with \cref{theorem1} to find the characteristic polynomial $P_{2n+1}$ for $n \le 5$. Example plots are shown to illustrate the behavior of the roots of these polynomials as a function of $H_{2n}$ (i.e., distance from the boundary of moment space). For completeness, we begin with the trivial case $n=1$.

\subsection{$n=1$}\label{ssec:n=1}

As first shown in \cite{flv2018}, here $S_3 =0$ and the characteristic polynomial is
\begin{equation}\label{eq:P3}
P_{3} = X (X^{2} - 3) = Q_1 R_2  \quad \Longrightarrow \quad 
R_2 = X^{2} - 3 = XQ_1-\beta_1Q_0.
\end{equation}
Thus, for $n=1$, there are three real-valued roots ($0, \pm \sqrt{3}$), which correspond to the root of $Q_1$ and the two roots of $R_2$. As is well known in the literature, the moment system (i.e., the 1-D Euler equations) with ($M_0,M_1,M_2$) is globally hyperbolic.

\subsection{$n=2$}

\begin{figure}[tbp]
	\centering
	\includegraphics[width=0.99\textwidth]{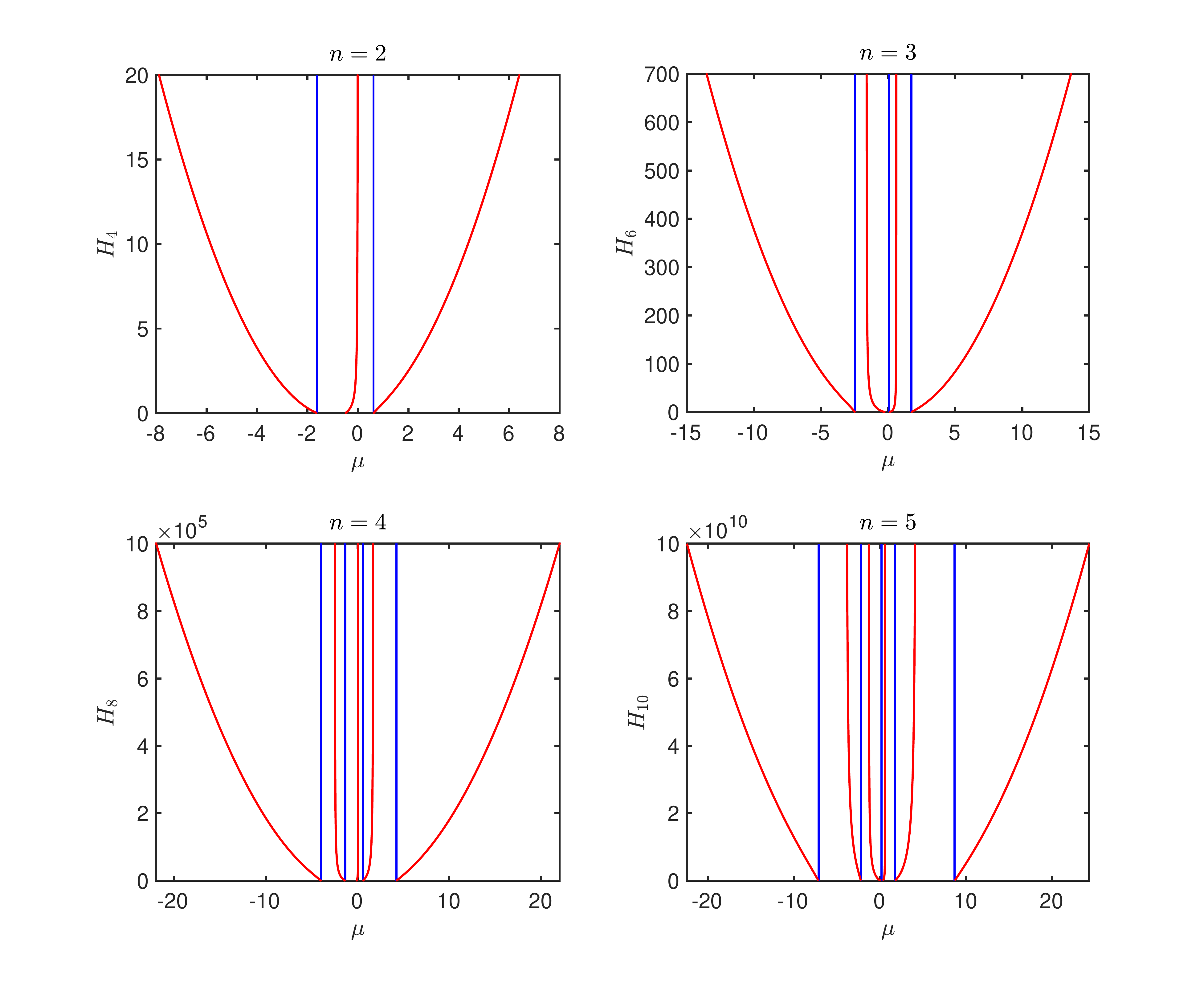}
	\caption{Roots $\mu$ of $Q_n$ (blue lines) and $R_{n+1}$ (red lines) and thus of $P_{2n+1}=Q_nR_{n+1}$ as functions of $H_{2n}$ for $n=2$ and $S_3=-1$ (top left), $n=3$ and $(S_3,S_4,S_5)=(-1,5,-8)$ (top right), $n=4$ and $(S_3,S_4,S_5,S_6,S_7)=(-1,5,-8,67.3,-100)$ (bottom left) and $n=5$ for $(S_3,S_4,S_5,S_6,S_7,S_8,S_9)=(-1,5,-8,67.3,-100,3000,0)$ (bottom right). For clarity, the lower-order standardized moments are held constant, but would likely also vary in real applications.}
	\label{fig:fig1}
\end{figure}

Here $\alpha_2=\frac{1}{2} S_3$ yields $S_5 = \frac{1}{2} S_3 (   5 S_4 - 3 S_3^2 - 1)$. The coefficients of the characteristic polynomial $P_5$ are
\begin{multline}\label{eq:PX+2}
		c_5 = 1, \
		c_4 = - \frac{5}{2} S_3 , \
		c_3 = \frac{1}{2}( -4 + 4 S_3^2 - 5 H_4 ) , \
		c_2 = \frac{1}{2} S_3 (6 - S_3^2 + 5 H_4 ) , \\
		c_1 = \frac{1}{2} ( 2 - 2 S_3^2 +5 H_4 ) , \
		c_0 = - \frac{1}{2} S_3 .
\end{multline}
When $H_4=0$, the middle root is located at $\alpha_2$.  Otherwise, there are two real-valued roots at $\frac{1}{2}[ S_3 \pm (4+S_3^2)^{1/2}]$ (i.e., the roots of $Q_2$), and three distinct real-valued roots from
\begin{equation}\label{eq:R2}
R_3  =
X^3 - \frac{3}{2} S_3 X^2 + \frac{1}{2}( -2 + S_3^2 - 5 H_4 ) X + \frac{1}{2} S_3.
\end{equation}
\cref{fig:fig1} illustrates the behavior of the roots for $S_3=-1$ with varying $H_4$. 
The roots of $Q_2$ are independent of $H_4$ and the two roots of $R_3$ join those of $Q_2$ when $H_4$ tends to zero.
Moreover, we can also see how the absolute value of the extremal roots of $R_3$ increase with $H_4$. 
We can also remark that with Gaussian moments, $Q_2$ is the Hermite polynomial $He_2$ and $R_3=X^3-6X$.

\subsection{$n>3$}\label{examplen>3}

For $n=3,4,5$, the characteristic polynomial $P_{2n+1}$ is found from $\alpha_n = a_n$.
In the same way as for $n=2$, 
\cref{fig:fig1} illustrates the behavior of its roots with fixed values of $(S_3,\dots,S_{2n-1})$ and varying $S_{2n}$, or equivalently $H_{2n}$.
For $n=3$, we took $(S_3,S_4,S_5)=(-1,5,-8)$; for $n=4$, $(S_3,S_4,S_5,S_6,S_7)=(-1,5,-8,67.3,-100)$; and for $n=5$, $$(S_3,S_4,S_5,S_6,S_7,S_8,S_9)=(-1,5,-8,67.3,-100,3000,0).$$
The same type of behavior as for the case $n=2$ is observed. Notwithstanding, for large $n$, the roots of $Q_n$ depend on all the standardized moments up to $S_{2n-1}$. Thus, the roots can be very different depending, for example, on how close the moment vector is to the boundary of moment space (which determines the lower bound on the even-order standardized moments).

Let us also remark that for $n=3$, with Gaussian moments, $Q_3$ is the Hermite polynomial $He_3$ and $R_4=X^4-10 X^2 + 9$.
In the case $n=4$, with nine roots depending on six parameters, the root locations can vary greatly for different values of the central moments, and will be very different from the roots of the Hermite polynomial $He_9$ used in Grad's moment closure \cite{grad1949}. 
For $n=5$, $Q_5$ is the Hermite polynomial $He_5$. For nearly Gaussian, asymmetric moment sets, the roots of $Q_n$ will be slightly displaced from those of the Hermite polynomial.

For the example in \cref{sec:tc}, calculations are done for $n$ up to 20 with no particular difficulties. In practice, the only foreseeable difficulty with using the HyQMOM closure for even larger $n$ is that associated with the accuracy of using the Chebyshev algorithm to find the recurrence coefficients from a realizable moment vector \cite{gautschi04}.

\section{Numerical example for the 1-D kinetic equation}\label{sec:tc}

In order to illustrate the predictions of the HyQMOM closure, we consider the 1-D Riemann problem from \cite{cflmv2017,flv2018}, for which the analytical solution for the VDF can be used to find reference solutions for the moments \cite{cflmv2017}. The initial condition for the mean velocity has a step at $x=0$:
$$
\bu = 
\begin{cases}
	+1 & \text{if $x< 0$,} \\
	-1 &  \text{if $x \ge 0$.} 
\end{cases}
$$
Otherwise, for all $x$, the initial moments correspond to a Maxwellian distribution function (i.e., $S_{2k+1}=0$ and $S_{2k+2}=(2k+1)S_{2k}$ for $k\ge 1$) with $M_0=1$ and $C_2 = \frac{1}{3}$. Due to the discontinuous mean velocity, for $t > 0$ and starting near $x=0$, the VDF quickly becomes far from Maxwellian.  The analytical solution is given in \cite{cflmv2017}, and reported in \cref{fig:fig5,fig:fig6} at $t=0.1$. 

The moment system \eqref{eq:closure5} is solved numerically on the 1-D computational domain $-0.5 < x < 0.5$ discretized into 4000 finite volumes using a first-order HLL scheme \cite{toro1999}. The CFL number is set to 0.5. 
The maximum/minimum eigenvalues over the domain needed to define the HLL fluxes are computed at each time step.
They are the maximum/minimum roots of $\bR_{n+1}$ and these roots are computed from the coefficients $\ba_k$ and $\bb_k$ by computing the eigenvalues of the corresponding Jacobi matrix given by \eqref{eq:JacobiR}. 
For comparison, the results found using the Gaussian, the Gaussian-EQMOM, and the entropy maximization closures are given in Fig.~2 of \cite{cflmv2017}. Results with $n=2$ for the previous definition of HyQMOM (i.e., with $a_2=0$) and QMOM are given in Figs.~1 and 2 of \cite{flv2018}.

\begin{figure}[tbp]
	\centering
	\includegraphics[width=0.99\textwidth]{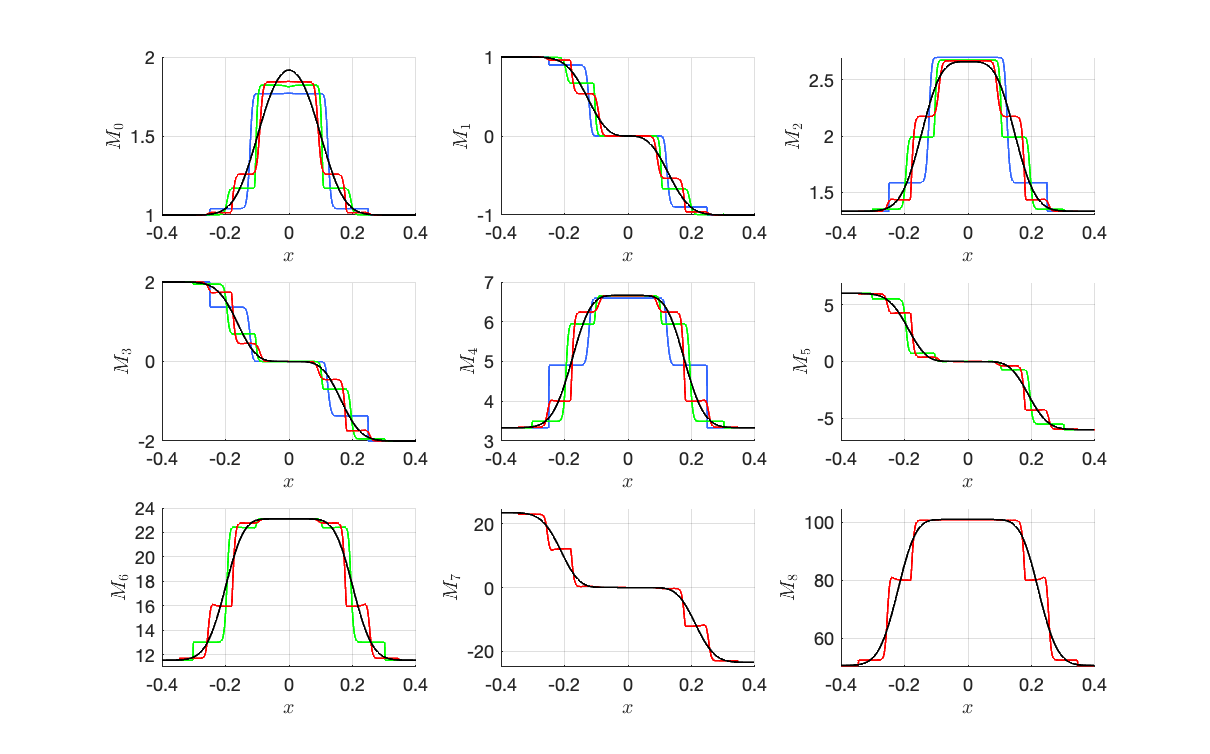}
		
	\caption{Numerical solution at $t=0.1$ of 1-D Riemann problem for the moments. Black, analytical solution. Blue, $n=2$. Green, $n=3$. Red, $n=4$.}
	\label{fig:fig5}
\end{figure}

\begin{figure}[tbp]
	\centering
	\includegraphics[width=0.99\textwidth]{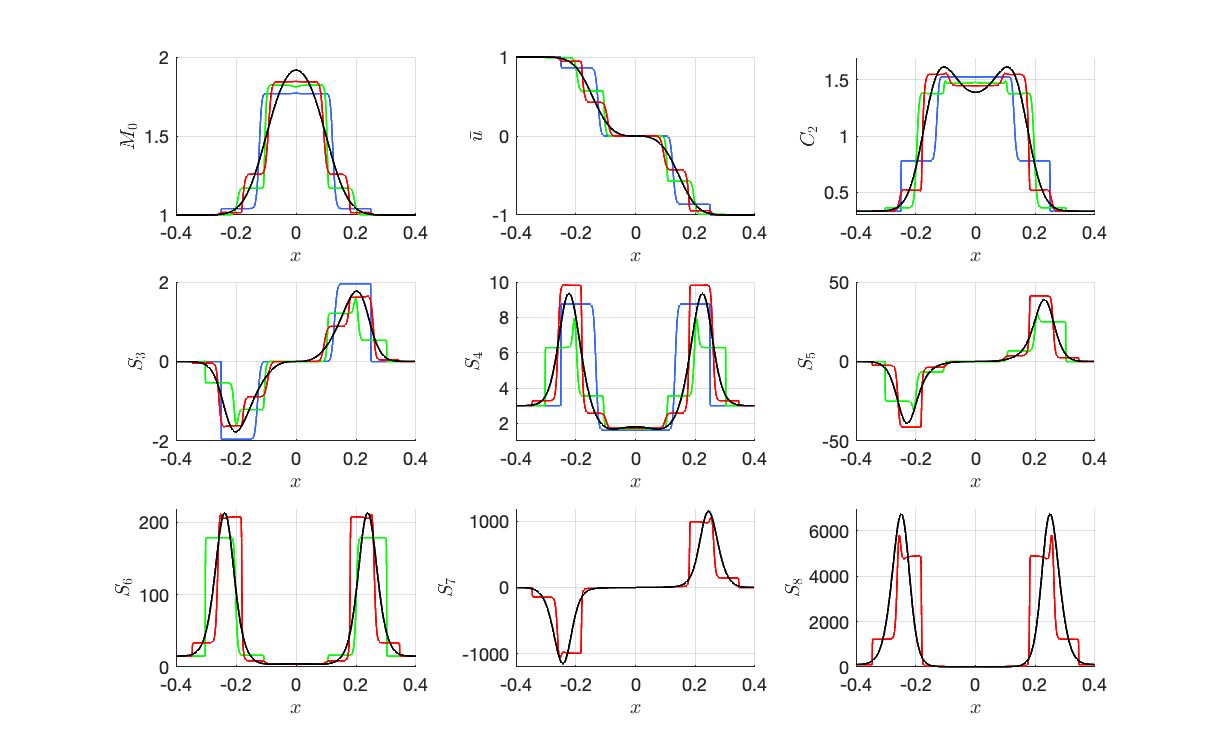}
	
	\caption{Numerical solution at $t=0.1$ of 1-D Riemann problem for the standardized moments. Black, analytical solution. Blue, $n=2$. Green, $n=3$. Red, $n=4$.}
	\label{fig:fig6}
\end{figure}

Qualitatively, the HyQMOM results for $n=2$ with $a_2 = \frac{1}{2} S_3$ are better than with $a_2=0$. This is likely due to the fact that the three eigenvalues are different due to their dependence on $a_2$. For example, when $S_3 < 0$ (i.e., $x < 0$), the most negative eigenvalue with $a_2 = \frac{1}{2} S_3$ has larger magnitude than with $a_2=0$. Thus, information propagates faster towards the left, giving, for example, a better approximation of the moments when compared to the analytical solution. This is clearly seen for $S_3$ where the location of the minimum (maximum) is better predicted with $a_2 = \frac{1}{2} S_3$, as is the maximum of $S_4$.  With $a_2=0$, the latter is significantly under-predicted (by a factor of two). In summary, for $n=2$ using the HyQMOM closure from this work yields more accurate predictions for the eigenvalues and thus for the moments. Notwithstanding, both definitions lead to globally hyperbolic moment systems, indicating that even within the family of hyperbolic closures improvements are possible by modifying the eigenvalues.

As can be seen from \cref{fig:fig5,fig:fig6}, the HyQMOM prediction improves with increasing $n$. As expected for a hyperbolic system with $2n+1$ degrees of freedom, the different speeds associated with the eigenvalues result in sub-shocks that are not present in the infinite-dimensional analytical solution. Remarkably, as $n$ increases, the speeds adapt to better capture the shapes of the moment profiles. We should remind the reader that these speeds are not known in advance (i.e., unlike with Hermite expansions), but adapt to the changing moments in a highly nonlinear manner. Although this non-linearity makes the analysis of the moment system challenging, it is a significant strength of QBMM because it allows the characteristic speeds to reflect very accurately the underlying moments. 
For example, with Gaussian moments a subset of the speeds correspond to the roots of a Hermite polynomial, while on the boundary of moment space they reduce to QMOM as required by the known form of the VDF. 

To examine convergence, the simulations were done without any trouble up to $n=20$, showing the robustness of the method.
Moreover, for the final solution, the maximal eigenvalue in absolute value is about 3.34 for $n=2$, 5.32 for $n=10$, and 6.5 for $n=20$; so they do not increase drastically.
The $L_2$ norm of the error for each moment is then computed for each simulation and normalized by the $L_2$ norm of the analytical solution.
These results are plotted on \cref{fig:fig7} as functions of $n$, varying from 2 to 20. 
Each line corresponds to a moment, from $M_0$ to $M_{20}$, with a gradation of the color from red to blue and then from blue to green.
The group of curves at the bottom corresponds to even-order moments, whereas the top group corresponds to odd-order moments, with (for fixed $n$) an error that increases with the order. Example results for the moments with $n=10$ are given in \cref{res10}. With $n=20$ the curves nearly overlap and cannot be distinguished with the scaling used for the plots.
Based on these results and \cref{fig:fig7}, the HyQMOM closure appears to converge with increasing $n$.
As a measure of the computation cost, the case with $n=20$ required 22~mins using Matlab on a laptop computer.

\begin{figure}[tbp]
	\centering
	\includegraphics[width=0.8\textwidth]{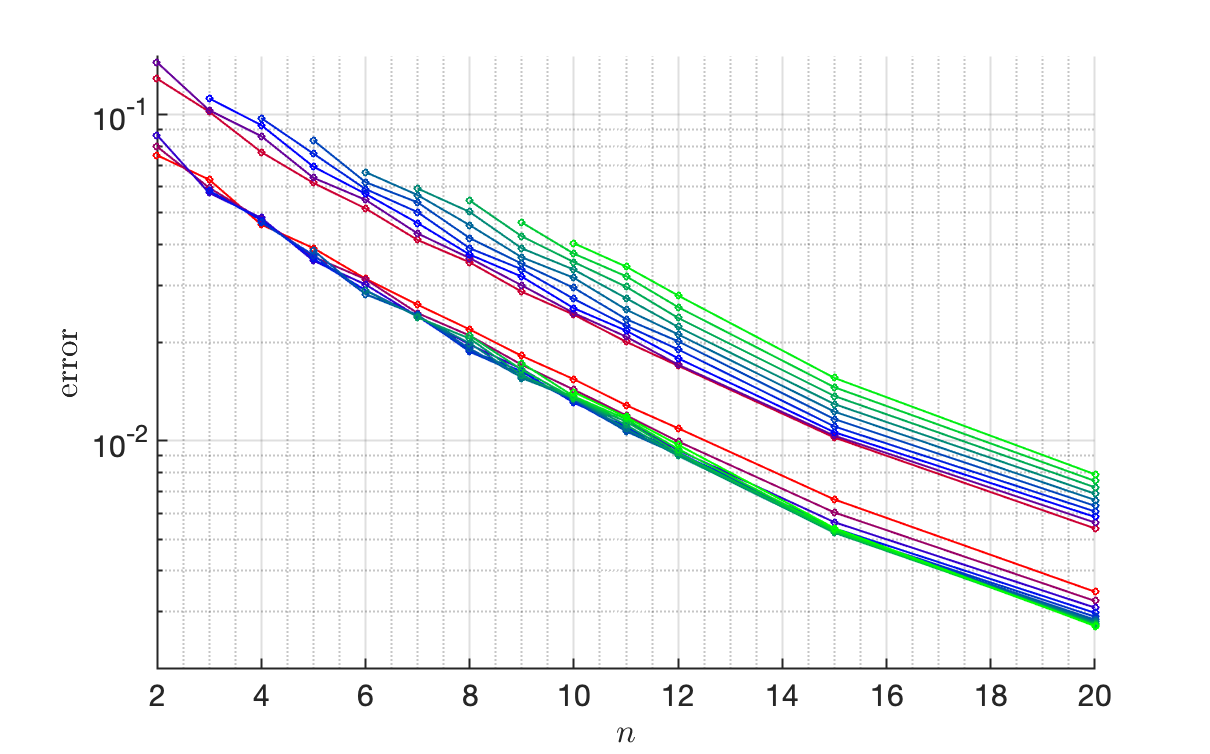}
	
	\caption{Error on the moments for the numerical solution at $t=0.1$ of the 1-D Riemann problem depending on $n$. Color gradation from red (for $M_0$) to blue (for $M_5$) and from blue (for $M_5$) to green ($M_{20}$).}
	\label{fig:fig7}
\end{figure}

\section{Discussion and conclusions}\label{sec:con}

Despite its apparently simple form, the path from the original HyQMOM closure for $n=2$ to the general HyQMOM closure in \cref{theorem2} was not straightforward. In \cite{flv2018}, the 1-D HyQMOM closure for $n=2$ was discovered by forcing 
an abscissa of a representing VDF to be located at the mean velocity (this induces $a_2=0$).
We therefore first sought to extend this condition to $2<n$, but were unsuccessful at finding a globally hyperbolic closure. By relaxing this condition, we were able to find closures with $\alpha_n \neq 0$ that are hyperbolic in restricted regions of moment space. However, the first real breakthrough came from the realization that as $H_{2n} \to 0$, we must have $P_{2n+1}(X) \to (X-\alpha_n) Q_n^2 (X)$. When combined with the condition $\langle P_{2n+1} \rangle = 0$, this implies that on the boundary of moment space we must have $a_n = \alpha_n$, which provides a closure for $S_{2n+1}$. Thus, using the results from  \cref{th:dependence} and \cref{theorem1}, we next sought functional forms for $a_n$ depending only on the standardized moments. 

This approach turned out to be relatively fruitful, enabling us to find closures up to $n=5$ that, at least numerically, appeared to be globally hyperbolic. Nevertheless, it was impossible to prove global hyperbolicity as the expressions were too complicated to advance analytically. However, we did observe that all such closures were \emph{nearly} of the form $P_{2n+1} = Q_n R_{n+1} + remainder$, where the remainder term went to zero as $H_{2n} \to 0$. As discussed in \cref{PC2} for $n=2$, the family of closures with $P_{2n+1} = Q_n R_{n+1}$ is not unique, nor is it usually possible to prove global hyperbolicity for larger $n$. But searching only for candidate polynomials $R_{n+1}$ greatly narrowed the field of possible closures. The decisive final breakthrough was \cref{theorema2}, which led us directly to  \cref{theorem2}. The classical proof of the relationship between the roots of $Q_n$ and $R_{n+1}$ in \cref{realroots} followed immediately, and established global hyperbolicity at least for $n\le 9$.

From a computational standpoint, Algorithm~\ref{al:closure} for computing $M_{2n+1}$ (and its extension in \cref{algo_closure}) is extremely efficient, especially when compared to the other candidate closures discussed above for which the cost of computing $M_{2n+1}$ becomes intractable for $5 \le n$. Likewise, the Jacobi matrices in \eqref{eq:JacobiQ} and \eqref{eq:JacobiR} provide the eigenvalues of the free-transport term at very little additional cost. As demonstrated in \cref{sec:tc}, this efficiency allowed us to test the convergence of the HyQMOM closure for $n=20$ (i.e., up to $M_{41}$). Nonetheless, it should be possible to go to even larger $n$ using Algorithm~\ref{al:closure} if needed. 

Because it was developed to control the eigenvalues of the free-transport term in the 1-D kinetic equation, one can ask whether the HyQMOM closure will be useful for closing other terms such as collisions or drag exchange with a second phase \cite{mf2013}. In any case, a VDF as a weighted sum of $n+1$ Dirac delta functions found from $\bfJ_{n+1}$ defined in \eqref{eq:JacobiQ} can be uniquely associated with the HyQMOM closure. The pragmatic response is that HyQMOM will be at least as good as QMOM with $n+1$ weights and abscissas  
for evaluating unclosed integrals with respect to the unknown velocity VDF. 

For population balance equations defined on semi-infinite or finite domains, it is not obvious that HyQMOM will provide any advantage relative to QMOM. Given that HyQMOM uses the even-order moment $M_{2n}$ while QMOM uses the odd-order moment $M_{2n+1}$, it will be necessary to prove that the HyQMOM closure for $M_{2n+1}$ is realizable for semi-infinite and finite domains. If this is the case, then the HyQMOM closure presented in this work can be used to investigate convergence with increasing $n$. Besides the kinetic equation and population balances, other potential applications of the HyQMOM closure include radiation transport \cite{vhwf2013,flz2020,flz2020jcp} and multiphase-flow models derived from a kinetic equation \cite{mf2013}.

Our current research is focused on the extension of 1-D HyQMOM to multiple dimensions (e.g., 2-D and 3-D) and infinite domains. In prior work with $n=2$ \cite{flv2018,pdf2019}, this was accomplished using the conditional QMOM \cite{yuan2011} and a reconstructed VDF based on the weights and abscissas corresponding to $Q_3$. For our extension of 1-D HyQMOM, we eschew that approach, and instead seek to close the  multi-dimensional moments appearing in the free-transport flux vector using ideas developed in the present work. Our initial results in this direction are promising from both an analytical and computational perspective.

\appendix

\section{Chebyshev and reverse Chebyshev algorithms}\label{CBA}

Let us consider a strictly realizable moment vector $\bfM_{N}$ and the associated linear functional $\langle . \rangle_{\bfM_{N}}$ on $\mathbb{R}[X]_{N}$ defined by \eqref{eq:defangle}.
Let us also define the sequence $(Q_k)_{k=0,\dots,n}$ of monic orthogonal polynomials for the scalar product $(p,q)\mapsto \langle pq \rangle_{\bfM_{N}}$ of $\mathbb{R}[X]_{n}$, with $n=\floor*{\frac{N}{2}}$.
The coefficients $(a_k)_{k=0,\dots,\floor*{\frac{N-1}{2}}}$ and $(b_k)_{k=0,\dots,\floor*{\frac{N}{2}}}$ of the recurrence relation for the monic orthogonal polynomials ($Q_{k+1}=(X-a_k)Q_k-b_kQ_{k-1}$ with $Q_0=1$, $Q_{-1}=0$) can be computed from the moments thanks to several algorithms: Rutishauser's QD algorithm \cite{Rutishauser54,Henrici58}, Gordon's PD algorithm \cite{Gordon68_1,Gordon68_2}, and a variation of an algorithm attributed to Chebyshev and given by Wheeler in \cite{w1974}.
However, except for the last one (referred to as the Chebyshev algorithm here), these algorithms first compute the variables $(\zeta_k)_{k=1,\dots,N}$ such that $b_k=\zeta_{2k-1}\zeta_{2k}$ and $a_k=\zeta_{2k}+\zeta_{2k+1}$.
They can then fail as a result of symmetries in a VDF corresponding to $\bfM_{N}$: the values $\zeta_k$ are indeed well defined and positive when the support of the VDF is included on $(0,+\infty)$, whereas some of them cannot be defined in some cases, like when the VDF corresponding to the moments is symmetric.
That is why we use only the Chebyshev algorithm.

As explained for example in \cite{w1974,gautschi04,Chebyshev1859}, the Chebyshev algorithm introduces the quantities $\sigma_{k,p}=\langle Q_kX^p \rangle_{\bfM_{N}}$ and uses the formulas
\begin{align}
	&b_k = \frac{\sigma_{k,k}}{\sigma_{k-1,k-1}}
	&\qquad&a_k = \frac{\sigma_{k,k+1}}{\sigma_{k,k}} - \frac{\sigma_{k-1,k}}{\sigma_{k-1,k-1}}
	\\
	&\sigma_{k+1,p}  =  \sigma_{k,p+1}  -  a_{k} \sigma_{k,p}  -  b_{k} \sigma_{k-1,p} 
	&\qquad&
	p \ge k+1 .
\end{align}
This is given in Algorithm~\ref{al:chebyshev}.
\begin{algorithm2e}[tbp] 
	\caption{Computation of the coefficients of the recurrence relation from the moments}\label{al:chebyshev}
	\KwData{$\bfM_{N}$ strictly realizable}
	\BlankLine
	\KwResult{$(a_k)_{k=0,\dots,\floor*{\frac{N-1}{2}}}$ and $(b_k)_{k=0,\dots,\floor*{\frac{N}{2}}}$}
	\BlankLine
	$n\leftarrow\floor*{\frac{N}{2}}$\;
	Initialize each scalar $\sigma_{k,p}$ to zero for $k\in\{-1,\dots,n\},p\in\{0,\dots,N\}$\;
	\For{$p\leftarrow 0$ \KwTo $N$}{
		$\sigma_{0,p} \leftarrow M_{p}$\;
	}
	$b_0 \leftarrow M_0$\;
	$a_0 \leftarrow \frac{M_1}{M_0}$\;
	\For{$k\leftarrow 1$ \KwTo $n$}{
		\For{$p\leftarrow k$ \KwTo $N-k$}{
			$\sigma_{k,p} \leftarrow \sigma_{k-1,p+1} - a_{k-1} \sigma_{k-1,p} -b_{k-1}  \sigma_{k-2,p}$\;
		}
		$b_{k} \leftarrow  \dfrac{\sigma_{k,k}}{\sigma_{k-1,k-1}}$ \;
		\If{$k<n$ or $N$ is odd}{
		$a_{k} \leftarrow \dfrac{\sigma_{k,k+1}}{\sigma_{k,k}} - \dfrac{\sigma_{k-1,k}}{\sigma_{k-1,k-1}}$\;
		}
	}		
\end{algorithm2e}

Conversely, from the coefficients $(a_k)_{k=0,\dots,n}$ and $(b_k)_{k=0,\dots,n}$, one can compute the moment vector $\bfM_{2n+1}$ through the reverse Chebyshev Algorithm~\ref{al:reversechebyshev}, using $Z_{k,p}=\sigma_{k,k+p}$.
Let us remark that it induces
\begin{equation}
Z_{k,0} = \prod_{j=0}^k b_j,\quad
Z_{k,1} = Z_{k,0}\sum_{j=0}^k a_j, \quad
Z_{k,p+1}=Z_{k+1,p-1} + a_{k}Z_{k,p}+b_{k}Z_{k-1,p+1},
\end{equation}
which imply that each $Z_{k,p}$, and then also $M_p$, is a multivariate polynomial function of the $a_j$ and $b_j$.

\begin{algorithm2e}[tbp] 
	\caption{Computation of the moments from the coefficients of the recurrence relation}\label{al:reversechebyshev}
	\KwData{$(a_k)_{k=0,\dots,n}$ and $(b_k)_{k=0,\dots,n}$}
	\BlankLine
	\KwResult{$\bfM_{2n+1}$}
	\BlankLine
	Initialize each scalar $Z_{k,p}$ to zero for $k\in\{-1,\dots,n\},p\in\{0,\dots,2n\}$\;
	$Z_{0,0}\leftarrow b_0$\;
	$Z_{0,1}\leftarrow b_0a_0$\;
	\For{$k\leftarrow 1$ \KwTo $n$}{
		$Z_{k,0}\leftarrow  b_kZ_{k-1,0}$\;
		$Z_{k,1} \leftarrow Z_{k,0}\left(a_k+ \dfrac{Z_{k-1,1}}{Z_{k-1,0}} \right) $\;
	}
	\For{$p\leftarrow 1$ \KwTo $2n$}{
		\For{$k\leftarrow 0$ \KwTo $\floor*{n-\frac{p}{2}}$}{
			$Z_{k,p+1} \leftarrow Z_{k+1,p-1} + a_{k}Z_{k,p}+b_{k}Z_{k-1,p+1}$\;
		}
	}
	\For{$p\leftarrow 0$ \KwTo $2n+1$}{
		$M_{p}\leftarrow Z_{0,p} $\;
	}
\end{algorithm2e}

\section{Hyperbolic closures for $n=2$}\label{PC2}

Let us consider the case with five moments. 
The closure $S_5(S_3,S_4)$ allows to compute the characteristic polynomial of the system:
\begin{equation}
	P_5(X) = c_0 + c_1 X + c_2 X^2 + c_3 X^3 + c_4 X^4 + X^5
\end{equation}
with
\begin{equation}
	\begin{gathered}
		c_4=- \frac{\partial S_5}{\partial S_4},
		\quad
		c_3=- \frac{\partial S_5}{\partial S_3},
		\quad
		c_2 = -\frac{1}{2}\left(3S_3c_3+4S_4c_4+S_5\right),
		\\
		c_1 = -\left(3c_3+4S_3c_4+5S_5\right),
		\quad
		c_0 = \frac{1}{2}\left(S_3c_3+2S_4c_4+3S_5\right).
	\end{gathered}
\end{equation}
Also recall that $Q_2(X) = X^2 - S_3 X -1$. The following theorem addresses the existence of hyperbolic closures for which $P_5$ is divisible by $Q_2$.

\begin{theorem}[Hyperbolic closure for $n=2$]\label{theoremapp}
	The polynomial $P_5$ is divisible by $Q_2$, i.e., $P_5=Q_2R_3$ with $R_3$ a real monic polynomial of degree 3, 
	if and only if there exists a real number $\gamma$ such that
	\begin{equation}\label{eq:cond_n2}
		S_5(S_3,S_4) =S_3 \left(2 + S_3^2 + \frac{5}{2} H_4 \right)+ \gamma H_4 \sqrt{4+S_3^2} \, .
	\end{equation}
	Moreover, $R_3$ has three real roots, which separate those of $Q_2$, if and only if $\gamma \in\left]-\frac{5}{2},\frac{5}{2}\right[$ .
\end{theorem}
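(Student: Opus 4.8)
Throughout, the moments are strictly realizable, so $H_4>0$. The plan is to use two structural facts that any characteristic polynomial $P_5$ coming from a closure $S_5(S_3,S_4)$ must satisfy: by \eqref{3const} it obeys $\langle P_5\rangle=\langle P_5'\rangle=\langle XP_5'\rangle=0$, and by \cref{theorem1} its coefficients of $X^4$ and $X^3$ are $-\partial S_5/\partial S_4$ and $-\partial S_5/\partial S_3$ (the remaining $c_2,c_1,c_0$ being then forced by the three constraints). Since here $Q_1=X$ and $Q_0=1$, every monic cubic is uniquely $R_3=(X-\alpha)Q_2-\beta Q_1-\delta Q_0$, so ``$Q_2$ divides $P_5$'' means $P_5=Q_2R_3$ for some reals $(\alpha,\beta,\delta)$, a priori functions of $S_3,S_4$; this is the $\delta\neq0$ counterpart of the situation in \cref{theorema2}. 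Imposing the three constraints on $P_5=Q_2R_3$, via orthogonality of the $Q_k$, the recurrence \eqref{Qn+1}, \cref{lemmaa2} with $n=2$, and the elementary values $\langle Q_2^2\rangle=H_4$, $\langle Q_1Q_2'\rangle=2$, $\langle Q_2'\rangle=-S_3$, $\langle XQ_2Q_2'\rangle=2H_4$, $\langle X^2Q_2Q_2'\rangle=H_4(2a_2+S_3)$, I expect to get $\langle P_5\rangle=(a_2-\alpha)H_4$, $\langle P_5'\rangle=5H_4-2\beta+S_3\delta$ and $\langle XP_5'\rangle=H_4(5a_2+2S_3)-4\alpha H_4-\beta S_3-2\delta$, so the three constraints become equivalent to
\begin{equation*}
\alpha=a_2,\qquad \beta=\tfrac12\bigl(5H_4+S_3\delta\bigr),\qquad \delta=\frac{H_4(2a_2-S_3)}{S_3^2+4}.
\end{equation*}
Using $a_2=\bigl(S_5-S_3(2+S_3^2+2H_4)\bigr)/H_4$ and setting $\gamma:=\bigl(a_2-\tfrac12S_3\bigr)/\sqrt{4+S_3^2}$, the relation $\alpha=a_2$ becomes exactly $S_5=S_3\bigl(2+S_3^2+\tfrac52H_4\bigr)+\gamma H_4\sqrt{4+S_3^2}$, and $\beta,\delta$ turn into explicit expressions in $\gamma,S_3,H_4$.

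It then remains, for the ``only if'' direction, to show $\gamma$ is constant rather than a function of $(S_3,S_4)$. The $X^4$-coefficient of $Q_2R_3$ equals $-2S_3-\alpha$, so $\partial S_5/\partial S_4=2S_3+a_2$; differentiating the formula for $S_5$ above in $S_4$ (using $\partial H_4/\partial S_4=1$) gives $2S_3+a_2+H_4\,\partial\gamma/\partial S_4$, forcing $\partial\gamma/\partial S_4=0$ since $H_4>0$. With $\gamma=\gamma(S_3)$, the $X^3$-coefficient of $Q_2R_3$ equals $2\alpha S_3+S_3^2-\beta-2$, so $\partial S_5/\partial S_3=\beta-2a_2S_3-S_3^2+2$; differentiating the formula for $S_5$ in $S_3$ (using $\partial H_4/\partial S_3=-2S_3$) and substituting $\beta$, every term cancels except $\gamma'(S_3)\,H_4\sqrt{4+S_3^2}$, so $\gamma'=0$. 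For the converse, given $S_5$ of the stated form with $\gamma\in\mathbb{R}$, one computes $a_2=\tfrac12S_3+\gamma\sqrt{4+S_3^2}$, and I would \emph{define} $R_3:=(X-a_2)Q_2-\beta Q_1-\delta Q_0$ with $\beta=\tfrac52H_4+\gamma S_3H_4/\sqrt{4+S_3^2}$ and $\delta=2\gamma H_4/\sqrt{4+S_3^2}$; then $Q_2R_3$ is monic of degree $5$, satisfies \eqref{3const} by the computation above, and has $X^4$- and $X^3$-coefficients equal to $-\partial S_5/\partial S_4$ and $-\partial S_5/\partial S_3$, so by \cref{theorem1} it coincides with the characteristic polynomial, i.e.\ $P_5=Q_2R_3$.

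For the root statement, let $x_1<x_2$ be the two real roots of $Q_2$ (the discriminant $S_3^2+4$ being always positive), so $x_1+x_2=S_3$, $x_1x_2=-1$ and $D:=x_2-x_1=\sqrt{4+S_3^2}$. Since $Q_2(x_i)=0$ while $Q_1=X$ and $Q_0=1$, we get $R_3(x_i)=-(\beta x_i+\delta)$. As $R_3$ is monic of degree $3$, it has three real roots separating $x_1$ and $x_2$ if and only if $R_3(x_1)>0>R_3(x_2)$, i.e.\ $\beta x_1+\delta<0<\beta x_2+\delta$. Inserting the expressions for $\beta,\delta$, pulling out the positive factor $H_4/D$, and using $S_3x_i+2=x_i^2+1$ (immediate from $Q_2(x_i)=0$), the number $\beta x_i+\delta$ has the sign of $\tfrac52Dx_i+\gamma(x_i^2+1)$; dividing by $x_i^2+1>0$ and using $Dx_1=x_1x_2-x_1^2=-(x_1^2+1)$ and $Dx_2=x_2^2-x_1x_2=x_2^2+1$, the two inequalities reduce to $\gamma<\tfrac52$ and $\gamma>-\tfrac52$ respectively, which is the claim; at $\gamma=\pm\tfrac52$ one of $x_1,x_2$ becomes a shared root of $R_3$ and $Q_2$, explaining the open interval.

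The step I expect to be the main obstacle is the constancy of $\gamma$ in the forward direction: the three linear constraints \eqref{3const} alone pin the closure down only to the one-parameter form with $\gamma=\gamma(S_3,S_4)$, and collapsing it to a single real parameter genuinely requires the integrability of the closure — the fact that the $X^4$ and $X^3$ coefficients of $P_5$ are partial derivatives of one and the same function $S_5$. A secondary nuisance is the careful bookkeeping of the non-orthogonal brackets (such as $\langle Q_2'\rangle=-S_3$ and $\langle XQ_1Q_2'\rangle=S_3$) introduced by the $-\delta Q_0$ term, which do not appear in \cref{theorema2}.
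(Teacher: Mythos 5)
Your proof is correct; I checked the bracket identities $\langle P_5\rangle=(a_2-\alpha)H_4$, $\langle P_5'\rangle=5H_4-2\beta+S_3\delta$, $\langle XP_5'\rangle=H_4(5a_2+2S_3)-4\alpha H_4-\beta S_3-2\delta$, the coefficient identities $c_4=-(2S_3+\alpha)$ and $c_3=2\alpha S_3+S_3^2-\beta-2$, the cancellation that forces $\partial\gamma/\partial S_4=\partial\gamma/\partial S_3=0$, and the sign evaluation $R_3(x_i)=-(\beta x_i+\delta)$, which reproduces exactly the paper's $R_3(r_\pm)$. The skeleton is the same as the paper's --- reduce divisibility to two first-order PDEs for $S_5$ via $c_4=-\partial S_5/\partial S_4$ and $c_3=-\partial S_5/\partial S_3$, integrate to obtain the one-parameter family, then decide interlacing from the signs of $R_3$ at the roots of $Q_2$ --- but your reduction is genuinely different in technique. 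The paper identifies all five coefficients of $Q_2R_3$ at once and solves a $5\times5$ linear system for $(a,b,c,c_3,c_4)$, then integrates with the intermediate ansatz $S_5=S_3(2+S_3^2+\tfrac52H_4+YH_4)$; you instead expand $R_3$ in the orthogonal basis as $(X-\alpha)Q_2-\beta Q_1-\delta Q_0$ and use the constraints \eqref{3const} (which the paper proves but never invokes in this appendix) to pin down $(\alpha,\beta,\delta)$ in terms of $a_2$, leaving only the two derivative identities as PDEs, which you integrate with the substitution $\gamma=(a_2-\tfrac12 S_3)/\sqrt{4+S_3^2}$ directly. Your route buys two things: it exhibits the $n=2$ case as \cref{theorema2} perturbed by a $-\delta Q_0$ term (with $\delta=0$ recovering the HyQMOM choice $\gamma=0$, i.e.\ $a_2=\tfrac12(a_0+a_1)$), and it isolates the integrability of the closure --- the fact that $c_4$ and $c_3$ are partial derivatives of one and the same function --- as the precise mechanism collapsing $\gamma(S_3,S_4)$ to a constant, which you correctly identify as the crux and handle cleanly; the converse direction is also properly closed by appealing to the uniqueness of $(c_0,c_1,c_2)$ given $(c_3,c_4)$ in \cref{theorem1}. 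The interlacing argument and the open interval $\left]-\tfrac52,\tfrac52\right[$ are essentially identical to the paper's.
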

\begin{proof}
	Identifying the coefficients, $P_5=Q_2R_3$ with $R_3=X^3 +aX^2 +bX + c$ is equivalent to
	$$
	\begin{pmatrix}
		1 & 0 & 0 & 0  & -1  \\
		-S_3 & 1 & 0 & -1  & 0  \\
		-1 & -S_3 & 1 & \frac{3}{2}S_3  & 2S_4  \\
		0 & 1 & S_3 & -3  & -4S_4  \\
		0 & 0 & 1 & \frac{1}{2}S_3  & S_4  
	\end{pmatrix}
	\begin{pmatrix}
		a  \\
		b \\
		c \\
		c_3 \\
		c_4
	\end{pmatrix}
	=
	\begin{pmatrix}
		S_3  \\
		1 \\
		-\frac{5}{2}S_5 \\
		5S_4 \\
		-\frac{3}{2}S_5
	\end{pmatrix} .
	$$
	This system gives $c_3$ and $c_4$, and, with the change of variable $S_4 = H_4 +S_3^2+1$,
	$$
	\left. \frac{\partial S_5}{\partial H_4} \right|_{S_3} = \frac{S_5-S_3^2-2S_3}{H_4},
	\quad
	\left. \frac{\partial S_5}{\partial S_3} \right|_{H_4} 
	=\frac{S_3S_5 + 2S_3^4+12S_3^2+10H_4 +8}{4+S_3^2}.
	$$
	Let us define $Y(S_3,S_4)$ such that $S_5(S_3,S_4) = S_3 \left( 2 + S_3^2 + \frac{5}{2} H_4 + YH_4 \right)$.
	Then 
	$$
	\left.\frac{\partial Y}{\partial H_4} \right|_{S_3} = 0,\quad
	\left. S_3(4+S_3^2)\frac{\partial Y}{\partial S_3} \right|_{H_4} = -4Y.
	$$
	This leads to $Y=\gamma \frac{\sqrt{4+S_3^2}}{S_3}$, so that we obtain \eqref{eq:cond_n2}.
	
	Moreover, let $r_{\pm}=\frac{S_3}{2}\pm \frac{\sqrt{4+S_3^2}}{2}$ denote the roots of $Q_2$.
	The values of $R_3$ at these points are
	$$
	R_3(r_+) = -\frac{H_4}{2}(2\gamma+5)r_+, 
	\quad
	R_3(r_-) = \frac{H_4}{2}(5-2\gamma)(-r_-).
	$$
	Since $r_+>0$ and $r_-<0$, it is easy to see that $R_3(r_+) <0$ and $R_3(r_-)>0$ if and only if $\gamma \in\left]-\frac{5}{2},\frac{5}{2}\right[$, thus concluding the proof.
\end{proof}

The five-moment system with the closure \eqref{eq:cond_n2} is globally hyperbolic for any $\gamma \in\left]-\frac{5}{2},\frac{5}{2}\right[$.
Let us remark that \eqref{eq:cond_n2} is equivalent to 
$$
a_2=\frac{S_3}{2} + \gamma \sqrt{4+S_3^2}\, \left(= \frac{(1+2\gamma)r_++(1-2\gamma)r_-}{2}\right).
$$
So, for $\gamma=0$, one have $a_2=(a_0+a_1)/2$, which is the HyQMOM closure from \cref{theorem2}. Furthermore, the globally hyperbolic closure introduced in \cite{flv2018} corresponds to $Y= -\frac{1}{2}$ (and $a_2=0$). However, since $Y$ is constant, \cref{theoremapp} does not apply and $P_5$ found with this closure is not divisible by $Q_2$.

\section{Matlab program for the proof of \cref{theorem2}}\label{matlabcode}
A Matlab symbolic code can be used to verify the result of \cref{theorem2}. For example, with $n=9$ the code is as follows:

\begin{verbatim}
clear all
close all

N = 9
Nmom = 2*N+1;

a = sym('a',[1,N+1],'real');
b = sym('b',[1,N+1],'real');
a(1) = 0;
b(1) = 1;
b(2) = 1;
%     closure: a(N+1)
a(N+1) = sum(a(1:N))/N;
% reverse Chebyshev algorithm
sig = sym('sig',[N+2,2*N+3],'real');
sig = sym(zeros(N+2,2*N+3));
S = sym('S',[1 2*N+1],'real');
S(1) = 0;
S(2) = 1;
sig(2,2) = 1;
sig(2,3) = a(1);
for k = 2:N+1
    sig(k+1,2) = sig(k,2)*b(k);
    sig(k+1,3) = sig(k+1,2)*(a(k)+sig(k,3)/sig(k,2));
end
for p = 3:2*N+2
   for k = 2:floor(N+3-p/2)
      sig(k,p+1) = sig(k+1,p-1)+a(k-1)*sig(k,p)+b(k-1)*sig(k-1,p+1);
   end
   S(p-1) = sig(2,p+1);
end

Y = [a(2) reshape([b(3:N+1);a(3:N+1)],[2*N-2 1])'];
J=simplify(jacobian(S(3:2*N),Y(1:2*N-2)));

Sc = S(2*N+1);

DSc = sym('DSc',[1 2*N],'real');
DScDab = jacobian(Sc,Y(1:2*N-2));
DSc(3:2*N) = (J'\ DScDab(1:2*N-2)')';
clear sig DScDab Y

% characteristic polynomial
P = sym(zeros(1,Nmom+1));
P(1) = 1;
P(2:Nmom-2) = -DSc(Nmom-1:-1:3);
P(Nmom-1)=sum((3:Nmom-1).*S(3:Nmom-1).*DSc(3:Nmom-1))/2-Nmom*Sc/2;
P(Nmom)=sum((3:Nmom-1).*S(2:Nmom-2).*DSc(3:Nmom-1))-Nmom*S(Nmom-1);
P(Nmom+1)=-sum((1:Nmom-3).*S(3:Nmom-1).*DSc(3:Nmom-1))/2 ...
     +(Nmom-2)*Sc/2;
clear J DSc S Sc

% calc Q
Q  = sym('Q',[N+1,N+1],'real');
Q  = sym(zeros(N+1,N+1));
% Q0
Q(1,1) = 1;
% Q1
Q(2,1) = 1;
% Qk
for k = 3:N+1
  Q(k,1:k) = Q(k-1,1:k)-a(k-1)*[0,Q(k-1,1:k-1)] ...
             -b(k-1)*[0,0,Q(k-2,1:k-2)];
end

syms x
PP = poly2sym(P,x);
QN = poly2sym(Q(N+1,1:N+1),x);
QNm = poly2sym(Q(N,1:N),x);
clear Q P
RR = simplify(expand(PP-QN*((x-a(N+1))*QN-(2*N+1)/N*b(N+1)*QNm)))
\end{verbatim}

\clearpage

\section{Results for 1-D Riemann problem with $n=10$}\label{res10}

\begin{figure}[tbh]
	\centering
	\includegraphics[width=0.99\textwidth]{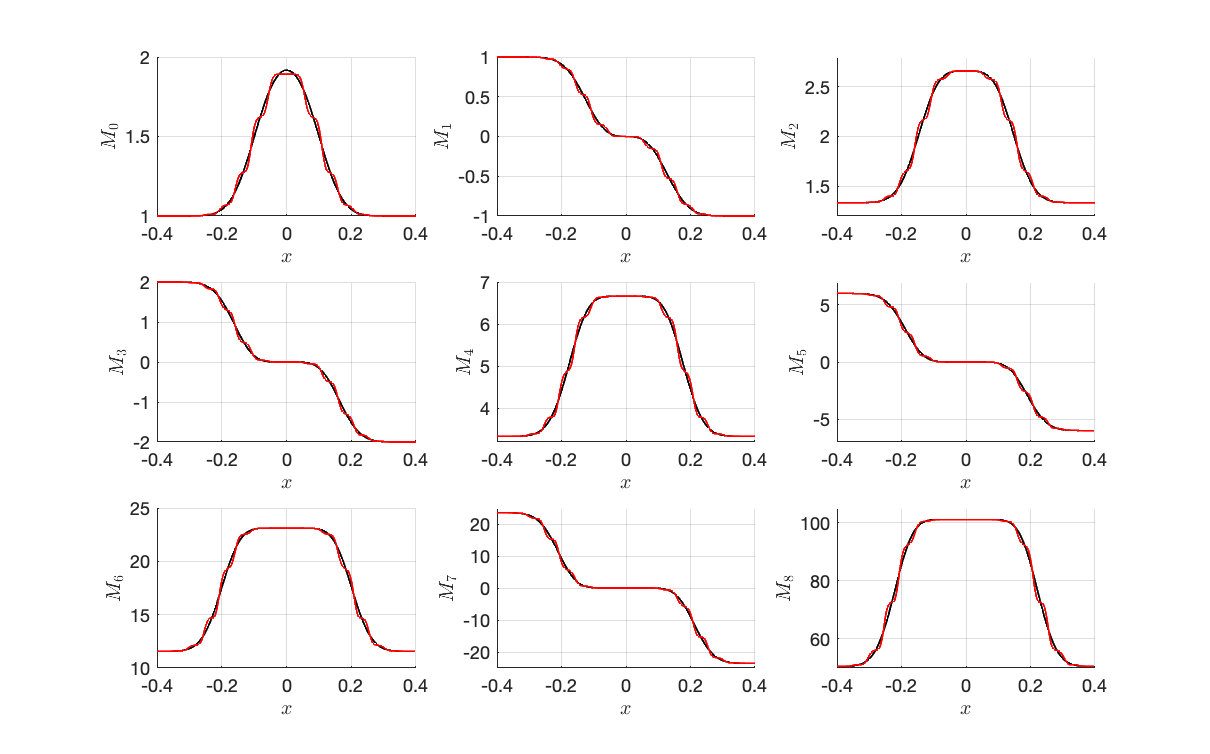}
	
	\includegraphics[width=0.99\textwidth]{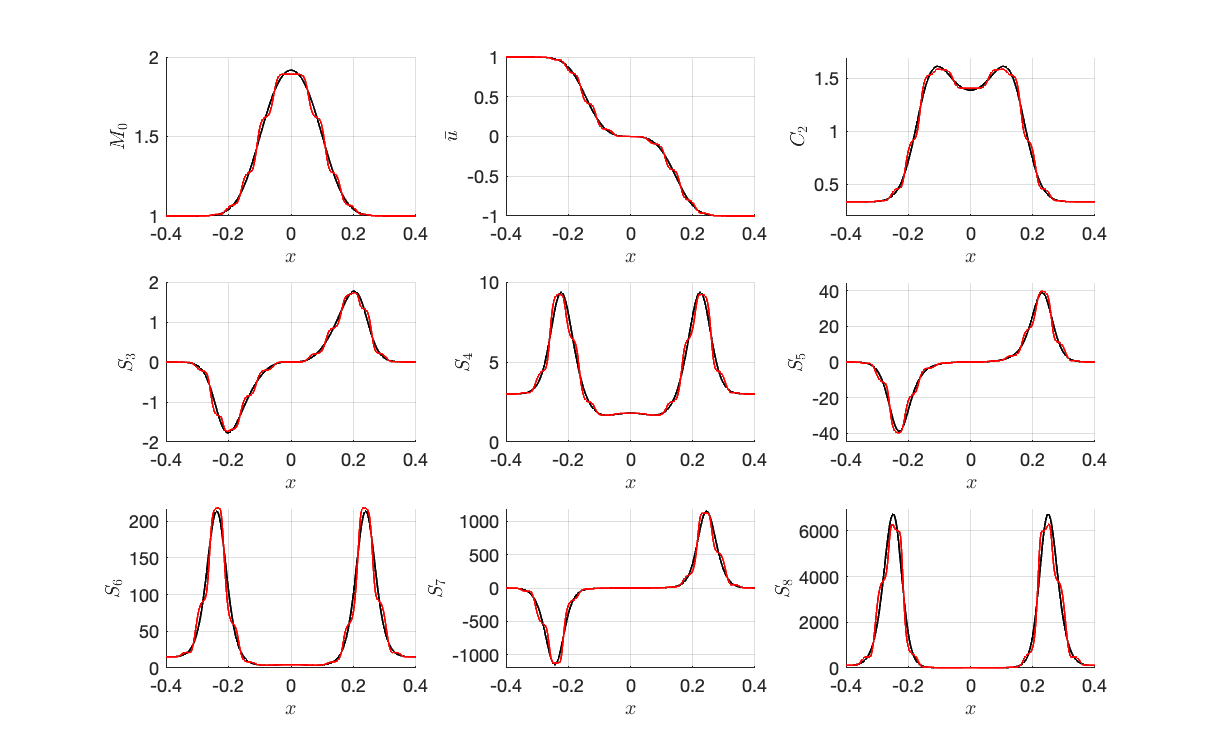}
	
	\caption{Numerical solution at $t=0.1$ of 1-D Riemann problem for the moments (top) and standardized moments (bottom). Black, analytical solution. Red, $n=10$.}
	\label{fig:fig10}
\end{figure}

\section{Algorithm for the computation of $M_{2n+1}$ for any realizable moment vector}\label{algo_closure}

Algorithm~\ref{al:closure} was restricted to a strictly realizable moment vector.
It can be generalized to any realizable moment vector, i.e., possibly on the boundary of moment space.
This includes the case $M_0=0$, where the VDF is zero and the case $C_2=0$ where the VDF is a Dirac delta function at $u=M_1/M_0$.
This also includes the case where $\bfM_{2n}$ is a sum of $k$ Dirac delta functions, with $k<n$, i.e., in such a case that $b_{k}=0$.
In this last case, the abscissas $x_p$ and weights $w_p$ for $p=1,\dots,k$ are computed from the Jacobi matrix $\bfJ_k$ defined by \eqref{eq:JacobiQ}. The abscissas are the eigenvalues of this matrix and the weights are computed from the eigenvectors \cite{gautschi04}.

\begin{algorithm2e}[tbh] 
	\caption{Computation of $M_{2n+1}$}\label{al:closure2}
	\KwData{$\bfM_{2n}$ realizable}
	\BlankLine
	\KwResult{$M_{2n+1}$}
	\BlankLine
	\If{$M_0=0$}{
		$M_{2n+1} \leftarrow 0$
	}
	\ElseIf{$M_2M_0-M_1^2=0$}{
		$M_{2n+1} \leftarrow \frac{M_1^{2n+1}}{M_0^{2n}}$\;
	}
	\Else{
	Initialize each scalar $\sigma_{k,p}$ to zero for $k\in\{-1,\dots,n\},p\in\{0,\dots,2n+1\}$\;
	\For{$p\leftarrow 0$ \KwTo $2n$}{
		$\sigma_{0,p} \leftarrow M_{p}$\;
	}
	$\ba_0 \leftarrow \frac{M_1}{M_0}$\;
	$\bb_0 \leftarrow M_0$\;
	$k \leftarrow 0$\;
	\While{$k\le n-2$ and $\bb_{k}>0$}{
		$k \leftarrow k+1$\;
		\For{$p\leftarrow k$ \KwTo $2n-k$}{
			$\sigma_{k,p} \leftarrow \sigma_{k-1,p+1} - \ba_{k-1} \sigma_{k-1,p} -\bb_{k-1}  \sigma_{k-2,p}$\;
		}
		$\ba_{k} \leftarrow \dfrac{\sigma_{k,k+1}}{\sigma_{k,k}} - \dfrac{\sigma_{k-1,k}}{\sigma_{k-1,k-1}}$\;
		$\bb_{k} \leftarrow  \dfrac{\sigma_{k,k}}{\sigma_{k-1,k-1}}$ \;
	}
	\If{$k=n-1$}{
	$\sigma_{n,n} \leftarrow \sigma_{n-1,n+1} - \ba_{n-1} \sigma_{n-1,n} -\bb_{n-1}  \sigma_{n-2,n}$\;
	$\bb_n \leftarrow  \dfrac{\sigma_{n,n}}{\sigma_{n-1,n-1}}$\;
	$\displaystyle \ba_n \leftarrow \frac{1}{n}\sum_{k=0}^{n-1} \ba_k$\;
	$\sigma_{n,n+1} \leftarrow \sigma_{n,n}\left(\ba_n+ \dfrac{\sigma_{n-1,n}}{\sigma_{n-1,n-1}} \right) $\;
	\For{$k\leftarrow n-1$ \KwTo $0$}{
		$\sigma_{k,2n-k+1} \leftarrow \sigma_{k+1,2n-k} + \ba_{k}\sigma_{k,2n-k}+\bb_{k}\sigma_{k-1,2n-k}$\;
	}
	$M_{2n+1} \leftarrow \sigma_{0,2n+1}$\;
	}
	\Else{
	compute the abscissas and weights $(x_p,w_p)_{p=1}^{k}$ from $(\ba_p,\bb_p)_{p=0}^{k-1}$\;
	$\displaystyle M_{2n+1} \leftarrow \sum_{p=1}^{k} w_p x_p^{2n+1}$\;
	}
	}
\end{algorithm2e}

\bibliographystyle{siamplain}
\bibliography{biblio_hyqmom}

\end{document}